\newcommand*\circled[1]{\tikz[baseline=(char.base)]{
            \node[shape=circle,draw,inner sep=2pt] (char) {#1};}}
\definecolor{gris25}{gray}{0.75}
\definecolor{mycolor}{rgb}{0, 0, 0.1}
\newmdenv[innerlinewidth=0.5pt, roundcorner=4pt,linecolor=mycolor,innerleftmargin=6pt,
innerrightmargin=6pt,innertopmargin=6pt,innerbottommargin=6pt]{mybox}
\renewcommand{\subset}{\subseteq}
\newtheorem{theorem}{Theorem}[section]
\newtheorem{lem}[theorem]{Lemma}
\newtheorem{prop}[theorem]{Proposition}
\date{}
\title{\bf{\textsc{Asymptotics of heights in random trees constructed by aggregation}}}
\author{\Large Bénédicte Haas\thanks{Université Paris 13, Sorbonne Paris Cité, LAGA, CNRS (UMR  7539) 93430 Villetaneuse, France. \newline E--mail:  \textcolor{red}{haas@math.univ-paris13.fr}. \newline This work is partially supported by the ANR GRAAL ANR--14--CE25--0014.}}
\begin{document}

\maketitle

\abstract{To each sequence $(a_n)$ of positive real numbers we associate a growing sequence $(T_n)$ of continuous trees built recursively by gluing at step $n$ a segment of length $a_n$ on a uniform point of the pre--existing tree, starting from a segment  $T_1$ of length $a_1$. Previous works \cite{ADGO14,CH15+} on that model focus on the influence of $(a_n)$ on the compactness and Hausdorff dimension of the limiting tree. Here we consider the cases where the sequence $(a_n)$ is regularly varying with a non--negative index, so that the sequence $(T_n)$ exploses. We  determine the asymptotics of the height of $T_n$ and of the subtrees of $T_n$ spanned by the root and $\ell$ points picked uniformly at random and independently in $T_n$, for all $\ell \in \mathbb N$.}

\bigskip

\tableofcontents

\bigskip

\setlength{\parindent}{0pt}
\setlength{\parskip}{8pt} 

\newpage

\section{Introduction}

A well--known construction of the Brownian continuum random tree presented by Aldous in the first of his series of papers \cite{Ald91a,Ald91,Ald93}  holds as follows. Consider a Poisson point process on $\mathbb R_+$ with intensity $t\mathrm dt$ and ``break" the half--line $\mathbb R_+$ at each point of the process. This gives an ordered sequence of closed segments  with random lengths. Take the first segment and glue on it the second segment at a point chosen uniformly at random (i.e. according to the normalized length measure). Then consider the continuous tree formed by these two first segments and glue 
on it the third segment at a point chosen uniformly at random. And so on. This gluing procedure, called the \textit{line--breaking construction} by Aldous \cite{Ald91a}, gives in the limit a version of the  Brownian CRT. 

We are interested in a generalization of this construction, starting from any sequence of positive terms
$$
(a_n, n \geq 1).
$$
For each $n$, we let $\mathsf b_n$ denote a closed segment of length $a_n$. The construction process then holds as above: we start with  $T_1:=\mathsf b_1$  and then recursively glue the segment $\mathsf b_n$ on a point chosen uniformly  on $T_{n-1}$, for all $n \geq 1$.  The trees $T_n$ are viewed as metric spaces, once endowed with their length metrics, which will be noted $d$ in all cases. This yields in the limit a random real tree obtained as the completion of the increasing union of the trees $T_n$, 
$$
\mathcal T:=\overline{\cup_{n\geq 1}T_n}
$$
that may possibly be infinite.  We let $d$ denote its metric as well, and decide to \emph{root} this tree at one of the two extremities of $\mathsf b_1$. 

This model has been recently studied by Curien and Haas \cite{CH15+} and Amini et al. \cite{ADGO14}. The paper \cite{CH15+} gives necessary and sufficient conditions on the sequence $(a_n)$ for $\mathcal T$ to be compact (equivalently bounded) and studies its Hausdorff dimension. Typically, if 
$$a_n\leq n^{\alpha+\circ(1)}   \text{ and }  a_1+\ldots + a_n= n^{\alpha+\circ(1)} \quad \text{for some }\alpha<0,$$ then almost surely the tree $\mathcal T$ is compact and its set of leaves has Hausdorff dimension $1/|\alpha|$, which ensures that the tree itself has Hausdorff dimension $\max(1,1/|\alpha|)$. This, as an example, retrieves the compactness of the Brownian CRT and that its Hausdorff dimension is 2.  On the other hand, the tree $\mathcal T$ is almost surely unbounded has soon as the sequence $(a_n)$ does not converge to 0.  The issue of finding an exact condition on $(a_n)$ for $\mathcal T$ to be bounded is still open. 
However, Amini et al. \cite{ADGO14} obtained an exact condition for $\mathcal T$ to be bounded, provided that  $(a_n)$ is \emph{non--increasing}. In that case,  almost surely, 
$$\mathcal T \ \text{ is bounded if and only if }  \  \sum_{i\geq 1} i^{-1}a_i<\infty.$$ 
There are also related works, with different gluing rules. Sénizergues \cite{Delphin16} studies a generalization of this $(a_n)$--model where the segments are replaced by $d$--dimensional independent random metric measured spaces ($d \in (0,\infty)$) and the gluing rules depend both on the diameters and the measures of the metric spaces. He shows an unexpected and intriguing Hausdorff dimension. 
In another direction,  Goldschmidt and Haas \cite{GH15} propose a construction of the stable Lévy trees  introduced by Duquesne, Le Gall and Le Jan \cite{DLG02,LGLJ98} that generalizes the  line--breaking construction of the Brownian CRT to this class of trees. Except in the Brownian case,  the stable Lévy trees are not binary and the gluing procedure is then slightly more complex.

The aim of the present paper is to examine the cases where  the $(a_n)$--model obviously leads to an infinite tree and 
we will almost always assume that
$$
\text{the sequence }(a_n) \text{ is regularly varying with index }\alpha \geq 0.
$$
We recall that this means that for all $c>0$,
$$
\frac{a_{\lfloor cn \rfloor}}{a_n} \underset{n \rightarrow \infty} \longrightarrow c^{\alpha},
$$
the prototype example being the power sequence $(n^{\alpha})$.
We refer to Bingham et al.  \cite{BGT89} for background on that topic. 
Our goal is to understand how the tree $T_n$ then grows as $n \rightarrow \infty$.  In that aim, we will study the asymptotic behavior of the height of a typical point of $T_n$ and of the height of $T_n$. We will see that in general these heights do not grow at the same rate.  We will also complete the study of the height of a typical point first by providing a functional convergence, and second by studying  the behavior of the subtrees of $T_n$ spanned by the root and $\ell$ points picked uniformly at random and independently in $T_n$, for all $\ell \in \mathbb N$.

\bigskip

\noindent \textbf{Height of a typical point and height of $T_n$.}
We are interested in the asymptotic behavior of the following quantities:
\begin{enumerate}
\item[$\bullet$] $D_n$: height of a typical point, i.e. given $T_n$, we pick $X_n \in T_n$ uniformly at random in $T_n$ and let $$D_n=d(X_n,\mathsf{root})$$ be its distance to the root;
\item[$\bullet$]  the height of the tree: $$H_n=\max_{v \in T_n} d(v,\mathsf{root}).$$
\end{enumerate}

\medskip

In the particular case where all the lengths $a_n$ are identical, the sequence $(T_n)$ can be coupled with a growing sequence of uniform recursive trees with i.i.d. uniform $(0,1)$ lengths on their edges. This is explained in Section \ref{Sec1}. The asymptotic behavior of the height of a uniform vertex and the height of a random recursive tree \emph{without} edge lengths (i.e. endowed with the graph distance) are well--known, \cite{Devroye87, Pittel94}. From this and the strong law of large number, we immediately get the asymptotic of $D_n$. The behavior of $H_n$ is less obvious. However, Broutin and Devroye  \cite{BD06} develop the material to study the height of random recursive trees with i.i.d. edge lengths, using the underlying branching structure and large deviation techniques. From this, we will deduce that:

\begin{theorem}
\label{thm:BroutinDevroye}
If $a_n=1$ for all $n\geq 1$, 
$$
\frac{D_n}{\ln(n)} \ \overset{\mathbb P}{\underset{n \rightarrow \infty} \longrightarrow} \  \frac{1}{2} \quad \quad \text{and} \quad \quad  \frac{H_n}{\ln(n)} \ \overset{\mathbb P}{\underset {n \rightarrow \infty}\longrightarrow} \ \frac{e^{\beta^*}}{2\beta^*},
$$
where $\beta^*$ is the unique solution in $(0,\infty)$ to the equation $2(e^{\beta}-1)=\beta e^{\beta}$. Approximately,
$
\beta^* \sim 1,594$ and $e^{\beta^*}/2\beta^* \sim 1,544.
$
\end{theorem}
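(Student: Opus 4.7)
The plan is to exploit the coupling, sketched in Section \ref{Sec1}, between $(T_n)$ for $a_n \equiv 1$ and a uniform recursive tree (URT) decorated with i.i.d.\ $U(0,1)$ edge marks. Since $P_n$ is uniformly distributed on a tree of total length $n-1$ made of unit segments, the containing-segment index $\sigma(n)$ is uniform on $\{1,\ldots,n-1\}$ and the position $U_n$ of $P_n$ on $\mathsf{b}_{\sigma(n)}$, measured from the root-side endpoint, is uniform on $(0,1)$; the pairs $(\sigma(n),U_n)_{n\geq 2}$ are independent. Thus $\sigma$ defines a URT on $\{1,\ldots,n\}$ decorated with i.i.d.\ $U(0,1)$ edge marks. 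The key identity is the one-line recursion for the distance $L_k$ from the free endpoint of $\mathsf{b}_k$ to the root: $L_1 = 1$ and $L_k = U_k + L_{\sigma(k)}$ for $k \geq 2$, which unrolls to $L_k = 1 + \sum_{j=0}^{d_k-1} U_{\sigma^j(k)}$, with $d_k$ the depth of $k$ in the URT.

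For $D_n$: a uniform point $X_n$ lies on $\mathsf{b}_K$ for $K$ uniform on $\{1,\ldots,n\}$, with offset $V \sim U(0,1)$ from $P_K$, so $D_n = V + L_K - 1$ is a sum of $d_K + 1$ independent $U(0,1)$ variables. By Devroye's theorem \cite{Devroye87}, $d_K/\ln n \to 1$ in probability, and the strong law of large numbers applied to the uniform summands yields $D_n/d_K \to 1/2$ in probability on the event $d_K \to \infty$ (which has probability tending to $1$). Combining gives $D_n/\ln n \to 1/2$ in probability.

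For $H_n$: along each segment the distance to the root is monotone affine, so $H_n = \max_{1 \leq k \leq n} L_k$, which is exactly the height of a URT with i.i.d.\ $U(0,1)$ edge weights as studied in \cite{BD06}. Their approach embeds the URT in a continuous-time Yule process, which turns $(L_v)$ into a branching random walk with $U(0,1)$ increments. The upper bound on $H_n$ is a first-moment computation: using the classical profile estimate $\mathbb{E}[\#\{k \leq n : d_k = d\}] \lesssim (\ln n)^d/d!$ together with the Cram\'{e}r bound $\mathbb{P}(\sum_{i=1}^d U_i \geq cd) \leq e^{-d I(c)}$, where $I(c) = \sup_\beta (\beta c - \log((e^\beta-1)/\beta))$, Stirling's formula shows that the expected number of vertices of depth $d = \lfloor a \ln n\rfloor$ with $L_k \geq cd$ is of order $n^{a(1 - \log a - I(c)) + o(1)}$. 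Hence $\limsup H_n/\ln n \leq \sup\{ca : I(c) \leq 1 - \log a,\ a > 0\}$ in probability. The matching lower bound is the main obstacle: it requires a second-moment estimate on the number of vertices with $(d_k, L_k)$ near the optimiser, exploiting the independence of disjoint subtrees in the Yule embedding. This is precisely the content of \cite{BD06} that we invoke.

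Finally, the explicit constant comes from the optimisation. At the optimum the constraint is binding, so $a = e^{1 - I(c)}$ and one maximises $c \cdot e^{1 - I(c)}$. Using the Legendre duality $I'(c) = \beta(c)$ (the maximising dual), the first-order condition reads $c\beta = 1$; coupled with $c = \Lambda'(\beta) = e^\beta/(e^\beta - 1) - 1/\beta$, this collapses to $2(e^\beta - 1) = \beta e^\beta$, whose unique positive root is $\beta^\star$. The optimal value $ca = (e^{\beta^\star} - 1)/(\beta^\star)^2$ then simplifies, via the defining equation of $\beta^\star$, to $e^{\beta^\star}/(2\beta^\star)$, as announced.
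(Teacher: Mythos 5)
Your proof follows the same overall plan as the paper: couple $(T_n)$ to a uniform recursive tree decorated with i.i.d.\ $U(0,1)$ edge lengths, reduce $D_n$ to the weighted depth of a (near-)uniform vertex, reduce $H_n$ to the weighted height of the decorated tree, and cite Broutin and Devroye \cite{BD06} for the hard (second-moment) lower bound. Two places differ in detail, and one is worth flagging as genuinely cleaner. For $D_n$, you work with the depth $d_K$ of a vertex $K$ uniform on $\{1,\dots,n\}$; this is fine, though what Devroye \cite{Devroye87} proves is the a.s.\ statement for the last-inserted vertex, and the paper uses that form directly by observing $D_n \overset{\mathrm{(d)}}{=} d(L_{n+1},\mathsf{root})-1$, whose discrete depth is exactly the depth of vertex $n$ in the genealogical recursive tree. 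Either route closes the argument. For the constant, the paper stays strictly inside the \cite{BD06} framework: it couples $\mathcal R_n$ to a binary tree whose edge-lengths are distributed as the mixture $Z$ with law $(\delta_0(\mathrm dx)+\mathbbm 1_{[0,1]}(x)\,\mathrm dx)/2$, invokes \cite[Theorem~1]{BD06}, and maximises $\alpha/\rho$ along the level set $\Lambda_Z^*(\alpha)+\Lambda_E^*(\rho)=\ln 2$. You instead run the optimisation directly for uniform weights, maximising $ca$ under the constraint $I(c)\le 1-\ln a$ with $I=\Lambda_U^*$; your first-order conditions $c\beta=1$ and $c=\Lambda'(\beta)$ collapse at once to $2(e^\beta-1)=\beta e^\beta$ and yield $e^{\beta^\star}/(2\beta^\star)$ with noticeably less bookkeeping --- this is correct (I checked it matches the paper's curve computation) and is arguably the nicer way to present the constant. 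One small inaccuracy: \cite{BD06} does not proceed via a Yule-process/branching-random-walk embedding; it is a direct discrete large-deviations analysis, and recursive trees enter only through the binary-tree coupling described in the paragraph after their Theorem~3, as the paper recalls. This misattribution does not affect your argument, since all you need from \cite{BD06} is the matching lower bound, which they do supply.
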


This will be carried out in Section \ref{Sec1}.
Our main contribution yet concerns the cases where the index of regular variation $\alpha$ is strictly positive. In that case we introduce  a random variable $\xi_{(\alpha)}$ characterized by its Laplace transform $\mathbb E[\lambda \xi_{(\alpha)}]=\exp(\phi_{(\alpha)}(\lambda))$, $\lambda \in \mathbb R$ where 
\begin{eqnarray}
\label{TransfoLapl}
\phi_{(\alpha)}(\lambda) = \frac{\alpha+1}{\alpha}\int_0^1 \big(\exp(\lambda u)-1 \big)\frac{1-u}{u} \mathrm du 
  =  \frac{\alpha+1}{\alpha}\sum_{k \geq 1}\frac{\lambda^k}{(k+1)!k}.
\end{eqnarray}
The Lévy--Khintchine formula ensures that $\xi_{(\alpha)}$ is infinitely divisible. Note also that $\xi_{(\alpha)}$ is stochastically decreasing in $\alpha$. Our main result is:

\medskip

\begin{theorem}
\label{thm:heightmax}
Assume that $(a_n)$ is regularly varying with index $\alpha>0$. Then,
\begin{enumerate}
\vspace{-0.3cm}
\item[\emph{(i)}] 
$$
\frac{D_n}{a_n} \ \overset{\mathrm{(d)}}{\underset{n \rightarrow \infty}\longrightarrow} \ \xi_{(\alpha)} 
$$
\item[\emph{(ii)}] 
$$
 \frac{H_n \cdot \ln(\ln(n))}{a_n \ln(n)} \ \overset{\mathrm{a.s.}}{\underset{n \rightarrow \infty}\longrightarrow} \ 1.
$$
\end{enumerate}
More precisely, in \emph{(i)}, $\mathbb E[\exp(\lambda a_n^{-1}D_n)]$ converges to $\mathbb E[\exp(\lambda \xi_{\alpha})]$ for all $\lambda \in \mathbb R$, which in particular implies the convergence of all positive moments.
\end{theorem}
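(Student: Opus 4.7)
The plan is to base everything on a single recursion for the Laplace transform $\varphi_n(\lambda):=\E{e^{\lambda D_n}}$, deduce \emph{(i)} via a Riemann--sum identification, and then convert the ensuing large-deviation estimates for $D_n$ into the almost-sure statement \emph{(ii)} for $H_n$.

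For the recursion, I would condition on $T_n$ and on which segment contains $X_n$: on the event $\{X_n\in \mathsf b_k\}$ one has $D_n = Y_k + U a_k$, where $Y_k$ is the height of the gluing point of $\mathsf b_k$ (distributed as $D_{k-1}$) and $U\sim\mathcal U(0,1)$ is independent. Since $\Prob{X_n\in\mathsf b_k\mid T_n}=a_k/L_n$ with $L_n=a_1+\cdots+a_n$, summing over $k$ and taking the telescoping difference between $n$ and $n-1$ yields
\[
\varphi_n(\lambda)\;=\;\varphi_{n-1}(\lambda)\,\Bigl(1+\frac{e^{\lambda a_n}-1-\lambda a_n}{\lambda L_n}\Bigr).
\]
Iterating, evaluating at $\lambda=\mu/a_n$, and taking logarithms, I would use Karamata's theorem ($L_k\sim ka_k/(\alpha+1)$) together with the uniform convergence of regular variation ($a_k/a_n\to (k/n)^{\alpha}$ for $k/n$ bounded away from $0$) to show that the resulting sum is asymptotic to the Riemann sum for
\[
\frac{\alpha+1}{\mu}\int_0^1 \frac{e^{\mu x^{\alpha}}-1-\mu x^{\alpha}}{x^{\alpha+1}}\,dx.
\]
The substitution $u=x^{\alpha}$ and a term-by-term Taylor expansion identify this with $\frac{\alpha+1}{\alpha}\sum_{k\geq 1}\mu^k/((k+1)!k)=\phi_{(\alpha)}(\mu)$, proving \emph{(i)}. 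The same product representation provides a uniform-in-$n$ upper bound $\varphi_n(\mu/a_n)\leq \exp(\phi_{(\alpha)}(\mu)+o(1))$, hence uniform integrability of $(D_n/a_n)^k$ and convergence of all positive moments.

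For the upper bound in \emph{(ii)}, I would use the structural identity $H_n = \max_{1\leq k\leq n}(Y_k+a_k)$, since the farthest point on $\mathsf b_k$ from the root is always its free end. Combining this with the asymptotic $\phi_{(\alpha)}(\lambda)\sim \frac{\alpha+1}{\alpha}e^{\lambda}/\lambda^{2}$ as $\lambda\to\infty$ (visible from the integral form of $\phi_{(\alpha)}$) and optimising the Chernoff inequality gives
\[
\Prob{D_n\geq x a_n}\;\leq\;\exp\bigl(-x\ln x\,(1+o(1))\bigr),
\]
valid uniformly in large $x$. A union bound in $k$ at the threshold $h=(1+\varepsilon) a_n \ln n/\ln\ln n$ then makes $\Prob{H_n\geq h}$ summable along the subsequence $n_j=2^j$, and Borel--Cantelli together with the monotonicity of $(H_n)$ and the slow variation of the normalising sequence in each block $[n_j,n_{j+1}]$ yields $\limsup H_n\ln\ln n/(a_n \ln n)\leq 1$ almost surely.

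The matching lower bound is the main obstacle. It requires the \emph{sharp} lower tail $\Prob{D_n\geq x a_n}\geq \exp(-x\ln x\,(1+o(1)))$, which I would obtain by a Cramér-type change of measure from the exact asymptotics of $\phi_{(\alpha)}$ and the uniform Laplace lower bound given by the product formula. Given such a tail bound, I would condition on $T_{n_j}$ with $n_j=2^j$: the $n_{j+1}-n_j$ subsequent gluing points are i.i.d.\ uniform on $T_{n_j}$, so the number of segments in the block $(n_j,n_{j+1}]$ whose free end reaches height $(1-\varepsilon)a_{n_{j+1}}\ln n_{j+1}/\ln\ln n_{j+1}$ is a sum of conditionally Bernoulli variables whose mean is of polynomial order $n_j^{\eta(\varepsilon)}$ for some $\eta(\varepsilon)>0$; a conditional Paley--Zygmund / second-moment estimate then produces positivity with high conditional probability and a further conditional Borel--Cantelli closes the argument. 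The technically delicate step is to promote the tail estimate for a \emph{single} height $D_{n_j}$ into a lower bound on the Lebesgue mass of the deep region of $T_{n_j}$, since the heights of two uniform points are strongly correlated through the tree structure; I expect this to require a separate second-moment computation for this mass, handled via the same recursive decomposition as in part \emph{(i)}.
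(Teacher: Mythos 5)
Parts (i) and the upper bound in (ii) of your proposal follow essentially the same path as the paper. Your telescoping recursion for $\varphi_n$ is algebraically identical to the paper's product
$\prod_{i=1}^n \bigl(1-\tfrac{a_i}{A_i}+\tfrac{a_i}{A_i}\tfrac{e^{\lambda a_i}-1}{\lambda a_i}\bigr)$, which the paper obtains from the i.i.d.\ representation $D_n=\sum_i a_i V_i\mathbbm 1_{\{U_i\le a_i/A_i\}}$; the Riemann-sum identification and the change of variables $u=x^\alpha$ are the same. For the upper bound in (ii) the paper likewise uses $H_n=\max_i(\overline D_{i-1}+a_i)$, a Chernoff-type estimate (its Lemma \ref{lem:equiv}~(i)) and Borel--Cantelli.

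The genuine gap is in the lower bound. Your claim that, conditionally on $T_{n_j}$, ``the $n_{j+1}-n_j$ subsequent gluing points are i.i.d.\ uniform on $T_{n_j}$'' is false: $\mathsf b_{n_j+2}$ is glued on a uniform point of $T_{n_j+1}$, not $T_{n_j}$, and more generally $\mathsf b_i$ is glued on $T_{i-1}$, so the gluing points of the segments in a block are neither uniform on $T_{n_j}$ nor independent of one another. You do recognise that some second-moment computation is needed to control the correlation between heights, but you stop at announcing it. This computation is precisely the technical core of the paper's proof and is not a routine step. The paper introduces $n$ exchangeable points $X_n^{(1)},\dots,X_n^{(n)}$ picked uniformly in $T_n$, and the decisive ingredient is Lemma~\ref{Twomarked}, which gives the \emph{exact} joint law of the two paths $(D_n^{(1)}(k))_k,(D_n^{(2)}(k))_k$: conditionally on the splitting index $\kappa$, the two paths coincide below $\kappa$, receive independent uniforms at $\kappa$, and above $\kappa$ are driven by pairs $(B^{(i,1)},B^{(i,2)})$ which are anti-correlated (they cannot both be $1$) and each stochastically dominated by a $\mathrm{Bernoulli}(a_i/A_i)$. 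This anti-correlation is what lets one split the sum over $\kappa$ and obtain $\Prob{A_n^{(1)}\cap A_n^{(2)}}\le \Prob{A_n^{(1)}}^2 + n^{-\gamma-1+\circ(1)}$, hence $\Prob{S_n=0}\le n^{\gamma-1+\circ(1)}$. Without this precise joint description the second-moment bound does not follow. Additionally, your bound $\Prob{D_n\ge xa_n}\ge \exp(-x\ln x(1+\circ(1)))$ is not quite what is used: the paper needs, and proves in Lemma~\ref{lem:equiv}~(ii), a lower bound on $\Prob{D_n - D_n(\lfloor nc\rfloor) > \gamma a_n \ln n/\ln\ln n}$ (the increment over a block), since it is the block-increments, not the full heights, that are independent across disjoint blocks and so allow either your dyadic Borel--Cantelli or the paper's iterative improvement $\Prob{H_n\le\cdot}\le n^{\gamma\sum_{j<k}c^{-\alpha j}-k+\circ(1)}$. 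To make your argument work you would need to (a) replace the false i.i.d.\ claim by the paper's scheme of uniform marked points, (b) actually establish the joint-law lemma or an equivalent anti-correlation statement, and (c) prove the block-increment deviation lower bound rather than the full-height one.
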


\medskip

The proof of (i)  is undertaken in Section \ref{SecOneDim} and relies on the powerful observation from \cite{CH15+} that $D_n$ can be written as the sum of i.i.d. random variables.  The proof of (ii), and in particular of the lower bound, is more intricate. It relies on the 
\emph{second moment method} and requires to get the joint distribution of the paths from the root to two points marked independently, uniformly in the tree $T_n$ (established in Section \ref{sec:Marking2})
as well as precise deviations bounds for the convergence (i) (established in Section \ref{SecOneDim}). The core of the proof of (ii) is undertaken in Section \ref{SecHeight}.

\smallskip

The two previous statements on the asymptotic behavior of $D_n$ can actually be grouped together and slightly generalized as follows:

\begin{prop}
\label{cvTypHeight}
Assume that $(a_n)$ is regularly varying with index $\alpha \geq 0$. Then, 
$$
\frac{D_n}{\sum_{i=1}^n i^{-1}a_i} \underset{n \rightarrow \infty} \longrightarrow 
\left\{\begin{array}{ll} \vspace{0.2cm} \displaystyle \alpha \xi_{(\alpha)} & \text{ if $\alpha>0$ \hspace{0.1cm} \emph{(}convergence in distribution\emph{)}} 
\\ \vspace{0.2cm} \displaystyle \frac{1}{2} & \text{ if $\alpha=0$ and $\sum_{i=1}^{\infty} i^{-1}a_i=\infty$ \hspace{0.1cm} \emph{(}convergence in probability\emph{)}} \\ D_{\infty} &  \text{ if $\alpha=0$ and $\sum_{i=1}^{\infty} i^{-1}a_i<\infty$ \hspace{0.1cm} \emph{(}convergence in distribution\emph{)}}\end{array}\right.
$$
where $D_{\infty}$ denotes a positive random variable with finite expectation.
\end{prop}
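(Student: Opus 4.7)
My plan builds on the distributional representation, implicit in \cite{CH15+} and already exploited in the proof of Theorem \ref{thm:heightmax}, that
\begin{equation*}
D_n \equidist \sum_{k=1}^n a_k U_k B_k,
\end{equation*}
where $(U_k)$ are i.i.d.\ uniform on $[0,1]$, the $(B_k)$ are independent with $B_k \sim \mathrm{Bernoulli}(a_k/L_k)$, $L_k := a_1+\cdots+a_k$, and all these variables are independent. This is obtained by induction on $n$: conditionally on $T_{n-1}$, a uniform point in $T_n$ falls on the freshly glued segment $\mathsf b_n$ with probability $a_n/L_n$, in which case its distance to the root equals an independent copy of $D_{n-1}$ (the height of the gluing point, itself uniform on $T_{n-1}$) plus an independent $a_n U_n$; otherwise it is uniform on $T_{n-1}$ and has height distributed like $D_{n-1}$.

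For $\alpha>0$, Karamata's theorem gives $S_n \sim a_n/\alpha$, so Slutsky's lemma combined with Theorem \ref{thm:heightmax}(i) yields $D_n/S_n = (a_n/S_n)(D_n/a_n) \convdist \alpha\,\xi_{(\alpha)}$. For $\alpha=0$ with $\sum_i a_i/i<\infty$, I would use that $L_k \sim k a_k$ for slowly varying $a_k$ (Karamata again), so $\mathbb E[a_k U_k B_k] = a_k^2/(2L_k) \sim a_k/(2k)$, which is summable. The nonnegative independent sum then converges almost surely to a positive random variable $D'_\infty$ of finite mean, and dividing by the finite deterministic limit $S_\infty := \sum_i a_i/i$ gives the claimed convergence with $D_\infty := D'_\infty/S_\infty$.

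The remaining case $\alpha=0$ with $\sum_i a_i/i=\infty$ I would treat by the second moment method. A Ces\`aro-type argument using $S_n \to \infty$ and $\mathbb E[a_k U_k B_k] \sim a_k/(2k)$ yields $\mathbb E[D_n] \sim S_n/2$. For the variance, $\mathrm{Var}(a_k U_k B_k) \leq a_k^3/(3L_k) \sim a_k^2/(3k)$, so the key analytic input, which I expect to be the principal technical obstacle, is the estimate
\begin{equation*}
\sum_{k=1}^n \frac{a_k^2}{k} = o(S_n^2) \qquad \text{as } n\to\infty.
\end{equation*}
This follows from $a_n/S_n \to 0$ (valid because otherwise $S_n$ would grow at least like $n^c$ for some $c>0$, contradicting that $S_n$ is slowly varying as the Karamata integral of a slowly varying function) together with Stolz--Ces\`aro applied to the discrete derivatives $\Delta(\sum a_k^2/k) = a_n^2/n$ and $\Delta(S_n^2) \sim 2 S_{n-1} a_n/n$. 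Chebyshev's inequality then delivers $D_n/S_n \to 1/2$ in probability.
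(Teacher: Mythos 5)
Your proof is correct and, for two of the three cases, parallels the paper's own argument. For $\alpha>0$ you do what the paper does: combine the relation $a_n/\sum_{i=1}^n i^{-1}a_i \to \alpha$ from (\ref{regularvar3}) (Karamata / BGT Theorem~1.5.11) with Theorem~\ref{thm:heightmax}~(i) and Slutsky. For $\alpha=0$ with $\sum_i i^{-1}a_i<\infty$ you also land on the same idea as the paper: the representation~(\ref{D_n}) yields a nondecreasing nonnegative sequence whose expectation stays bounded (since $a_i^2/A_i \sim a_i/i$ is summable), so the series converges almost surely and in $L^1$; dividing by the finite constant $S_\infty$ is harmless, and the paper phrases the same fact as ``$(D_n)$ is stochastically increasing and $\sup_n\mathbb{E}[D_n]<\infty$.''

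Where you genuinely diverge from the paper is the middle case, $\alpha=0$ with $\sum_i i^{-1}a_i=\infty$. The paper adapts the Laplace-transform calculation of Lemma~\ref{lemLaplace}~(i), under the sufficient condition $\max_{1\leq i\leq n} a_i = \mathrm{o}\bigl(\sum_{i\leq n} A_i^{-1}a_i^2\bigr)$, which it then verifies for regularly varying sequences via~(\ref{regularvar2})--(\ref{regularvar3}). You instead run the second-moment method: $\mathbb{E}[D_n]=\tfrac12\sum_{k\leq n} a_k^2/A_k \sim S_n/2$ by Ces\`aro, $\mathrm{Var}(D_n)\leq \sum_{k\leq n}a_k^3/(3A_k)\sim \tfrac13\sum_{k\leq n} a_k^2/k$, and the key estimate $\sum_{k\leq n} a_k^2/k=\mathrm{o}(S_n^2)$ via Stolz--Ces\`aro applied to $\Delta(\sum a_k^2/k)=a_n^2/n$ and $\Delta(S_n^2)\sim 2S_n a_n/n$, whose ratio is $a_n/(2S_n)\to0$. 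This is entirely sound and arguably slightly cleaner, since the Stolz--Ces\`aro version needs only $a_n=\mathrm{o}(S_n)$ rather than a uniform control on $\max_{i\leq n}a_i$. One small repair is warranted, though: your parenthetical justification for $a_n/S_n\to0$ (that otherwise $S_n$ would grow at least like $n^c$) is not tight as stated, because the failure of the limit only gives $a_{n_k}\geq\delta S_{n_k}$ along a subsequence, which does not iterate into a multiplicative lower bound for $S_n$ for all $n$. There is no real gap --- $a_n/S_n\to\alpha=0$ is exactly~(\ref{regularvar3}), i.e.\ BGT Theorem~1.5.11 --- so simply invoke that rather than argue by contradiction.
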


This will be explained in the remark around (\ref{TypHGen}) in Section \ref{SecOneDim}.

\bigskip

\smallskip

\textbf{Height of the $n$--th leaf and height of a uniform leaf.} In the recursive construction of $(T_n)$, we can label the leaves $L_1,L_2,\ldots$ by order of apparition, so that the leaf $L_n$ belongs to the segment $\mathsf b_n$. We then let $L_{n,\star}$ denote a leaf chosen uniformly at random amongst the $n$ leaves of $T_n$.
Theorem \ref{thm:heightmax} (i) implies that when $(a_n)$ varies regularly with index $\alpha>0$,
\begin{equation}
\label{UnifLeaf}
\frac{d(L_n,\mathsf{root})}{a_n} \ \overset{\mathrm{(d)}}{\underset{n \rightarrow \infty}\longrightarrow} \ 1+\xi_{(\alpha)} \quad \quad \text{and} \quad \quad  \frac{d(L_{n,\star},\mathsf{root})}{a_n} \ \overset{\mathrm{(d)}}{\underset{n \rightarrow \infty}\longrightarrow} \ (1+\xi_{(\alpha)}) U^{\alpha}, 
\end{equation}
where $U$ is uniform on $(0,1)$, independent of $\xi_{(\alpha)}$.
The first convergence is simply due to the fact that the distance $d(L_n,\mathsf{root})$ is distributed as $a_n+D_{n-1}$, since the segment $\mathsf b_n$ is inserted on a uniform point of $T_{n-1}$. 
The second convergence  is explained in Section \ref{SecOneDim}.
When $a_n=n$ for all $n$, $d(L_n,\mathsf{root})$ and $d(L_{n,\star},\mathsf{root})$ both divided by $\ln(n)$ converge to $1/2$, almost surely and in probability respectively (see Section \ref{Sec1}).

\bigskip

\smallskip

\noindent \textbf{Functional convergence.} The convergence of the height of a typical point can actually be improved into a functional convergence when the index of regular variation is strictly positive.  As above, let $X_n$ be a point picked uniformly in $T_n$ and for each positive integer $k \leq n$, let $X_n(k)$ denote its projection onto $T_k$. Let then $$D_n(k):=d\left(X_n(k),\mathsf{root}\right), \quad 1 \leq k \leq n$$ be the non--decreasing sequence of  distances of these branch--points to the root. If a climber decides to climb from the root to the typical point $X_n$ at speed 1, $D_n(k)$ is the time he will spend in $T_k$. The proof of Theorem \ref{thm:heightmax} (i) can be adapted to get the behavior as $n \rightarrow \infty$ of the sequence $(D_n(k),1 \leq k \leq n)$. 
To do so, introduce for $\alpha>0$ the càdlàg Markov process with independent, positive increments defined by
\begin{equation}
\label{processXi}
\xi_{(\alpha)}(t):=\sum_{t_i \leq t} v_i, \quad  t \geq 0,
\end{equation}
where $(t_i,v_i)$  is a Poisson point process with intensity $(\alpha+1)t^{-\alpha-1} \mathbbm 1_{\{v \leq t^{\alpha}\}} \mathrm dt \mathrm dv $ on $(0,\infty)^2$. (We note that $\xi_{(\alpha)}(1)$ is distributed as the r.v. $\xi_{(\alpha)}$ defined via (\ref{TransfoLapl}).) This process is $\alpha$--self-similar, in the sense that for all $a>0$,
$$
\big(\xi_{(\alpha)}(at), t \geq 0 \big) \ \overset{\mathrm{(d)}}{=} \  \big(a^{\alpha}\xi_{(\alpha)}(t), t \geq 0 \big).
$$ 

\medskip

\begin{prop}
\label{propfunctional}
If $(a_n)$ is regularly varying with index $\alpha>0$, 

\vspace{-0.1cm}
$$
\left(\frac{D_n\left(\lfloor nt \rfloor\right)}{a_n}, 0 \leq t \leq 1 \right) \ \overset{\mathrm{(d)}}{\underset{n \rightarrow \infty}\longrightarrow} \ \left(\xi_{(\alpha)}(t), 0 \leq t \leq 1\right)
$$

for the Skorokhod topology on $D([0,1],\mathbb R_+)$, the set of càdlàg functions from $[0,1]$ to $\mathbb R_+$.
\end{prop}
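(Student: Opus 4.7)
The plan is to combine two ingredients. First, I would extend the recursive Laplace-transform argument of \cite{CH15+} that drives the proof of Theorem~\ref{thm:heightmax}(i) from a single marginal to arbitrarily many, obtaining a closed-form joint Laplace transform that yields the convergence of finite-dimensional distributions; second, I would upgrade this to Skorokhod convergence via a monotonicity argument, which applies because both $t\mapsto D_n(\lfloor nt\rfloor)/a_n$ and $\xi_{(\alpha)}$ are non-decreasing càdlàg functions on $[0,1]$ and $\xi_{(\alpha)}$ is almost surely continuous at each fixed $t\in(0,1]$ (its jumps form a diffuse Poisson point process on $(0,1]$).

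For the finite-dimensional step, I would start from the path decomposition
\[
D_n(k)=\sum_{j=1}^{k} U_j\, a_j\, \mathbbm{1}_{A_j^n},\qquad 1\le k\le n,
\]
where $A_j^n$ is the event that $\mathsf b_j$ lies in the ancestor chain from the root to the uniform point $X_n$ in $T_n$ and $(U_j)$ are i.i.d.\ uniform on $[0,1]$ independent of the chain. Setting $k_j:=\lfloor nt_j\rfloor$ for $0<t_1<\cdots<t_\ell\le 1$ and writing $\mu(k):=\sum_{j:\,k_j\ge k}\mu_j$, a single-step induction on $m$ based on the backward Markov structure of the chain---initial law $a_k/A_m$ for the segment containing a uniform point of $T_m$, transitions $\mathbb P(k^{(i+1)}=j\mid k^{(i)}=k)=a_j/A_{k-1}$---yields the product formula
\[
\mathbb E\Big[\exp\Big(\sum_{j=1}^\ell \mu_j D_n(k_j)\Big)\Big]=\prod_{k=1}^{n}\Big(1+\frac{a_k}{A_k}\bigl(g_{\mu(k)}(a_k)-1\bigr)\Big),\qquad g_\nu(a):=\frac{e^{\nu a}-1}{\nu a},
\]
with $A_k:=a_1+\cdots+a_k$. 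Specialising to $\mu_j=\lambda_j/a_n$ and letting $n\to\infty$, Karamata's theorem $A_k\sim ka_k/(\alpha+1)$ and the regular variation $a_k/a_n\to(k/n)^\alpha$ turn the logarithm of this product into a Riemann sum that converges, window by window over $(t_{j-1},t_j]$ (with $t_0:=0$), to $(\alpha+1)\int_{t_{j-1}}^{t_j}\bigl[(e^{\Lambda_j u^\alpha}-1)/(\Lambda_j u^\alpha)-1\bigr]du/u$ with $\Lambda_j:=\lambda_j+\cdots+\lambda_\ell$. An elementary change of variable identifies this with the Lévy exponent of $\xi_{(\alpha)}(t_j)-\xi_{(\alpha)}(t_{j-1})$ evaluated at $\Lambda_j$, and summing over $j$ reconstructs exactly the joint Laplace transform of $(\xi_{(\alpha)}(t_j))_{1\le j\le\ell}$, making essential use of the independence of the increments of $\xi_{(\alpha)}$.

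Once the finite-dimensional convergence is secured, the monotonicity step is classical: for non-decreasing càdlàg processes whose limit is almost surely continuous at each fixed $t$, Skorokhod convergence in $D([0,1],\mathbb R_+)$ reduces to convergence of finite-dimensional marginals on a dense countable subset of $(0,1]$ (to which $t=1$ is adjoined), essentially via Helly's selection theorem applied to monotone sample paths. The main obstacle is the derivation of the joint product formula: the single-index recursion of \cite{CH15+} generalises, but one must carefully track, at each step $k$, the linear combination $\mu(k)$ of the $\mu_j$'s that still survives (namely those with $k_j\ge k$), and control the small-index region $k=O(1)$ where the Riemann-sum heuristic breaks down, using the exponential moment estimates supplied by the proof of Theorem~\ref{thm:heightmax}(i) to ensure both the pointwise convergence of the logarithm and the uniform integrability needed to upgrade it to the convergence of Laplace transforms.
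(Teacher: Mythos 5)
Your finite-dimensional step is sound but more laborious than the paper's. The paper simply observes that $\xi_n(t):=D_n(\lfloor nt\rfloor)/a_n$ and $\xi_{(\alpha)}$ both have independent increments by construction, so it suffices to establish the one-dimensional convergence $\xi_n(t)-\xi_n(s)\Rightarrow\xi_{(\alpha)}(t)-\xi_{(\alpha)}(s)$ for each pair $s\le t$, which comes out of essentially the same Laplace-transform calculation as in the proof of Theorem~\ref{thm:heightmax}(i), matched against Campbell's formula for the Poisson point process. Your joint product formula carries the same content, just tracked through heavier notation; the small-index region $k=O(1)$ is handled exactly as in Lemma~\ref{lemLaplace}.

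The genuine gap is in the step from finite-dimensional convergence to Skorokhod convergence. The assertion that, for non-decreasing c\`adl\`ag processes whose limit is a.s.\ continuous at each fixed $t$, $J_1$-convergence in $D([0,1],\R_+)$ reduces to finite-dimensional convergence on a dense set is \emph{false}. The obstruction is jump merging, which the $J_1$ topology does not permit. A concrete counterexample: let $V$ be uniform on $(\tfrac14,\tfrac34)$ and set $X^n(t)=\mathbbm 1_{\{t\ge V-1/n\}}+\mathbbm 1_{\{t\ge V+1/n\}}$ and $X(t)=2\,\mathbbm 1_{\{t\ge V\}}$. Then $X^n, X$ are non-decreasing c\`adl\`ag, $X$ is a.s.\ continuous at each fixed $t$, and the finite-dimensional distributions of $X^n$ converge to those of $X$; yet each $X^n$ has two jumps of size $1$ while $X$ has a single jump of size $2$, so $X^n$ does not converge to $X$ in $J_1$. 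What your monotonicity-plus-Helly argument does give is $M_1$-convergence, but the statement of Proposition~\ref{propfunctional} (and the tool used in its proof, Aldous' criterion) is about $J_1$. To close the gap one must actually prove $J_1$-tightness: the paper does this by verifying Aldous' criterion, bounding $\mathbb P\big(\xi_n((\tau+\delta)\wedge 1)-\xi_n(\tau)>\varepsilon\big)$ uniformly over stopping times $\tau$ via a first-moment/Markov estimate combined with the regular-variation bounds (\ref{regularvar1})--(\ref{regularvar2}). (An alternative route would invoke the theory of convergence of processes with independent increments, as in Jacod--Shiryaev, Chapter VII, using the independence structure your Laplace-transform formula encodes---but that is a different argument from the one you outline, and still requires more than monotonicity and fdd convergence.)
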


This  is proved in Section \ref{Secfunc}. 

\bigskip

\smallskip

\noindent \textbf{Gromov--Prohorov--type convergence.} Last, fix $\ell$ a positive integer, and given $T_n$, let $X_n^{(1)},\ldots,X_n^{(\ell)}$ be $\ell$ points picked independently and uniformly at random in $T_n$. Our goal is to describe the asymptotic behavior of $T_n(\ell)$, the subtree of $T_n$ spanned by these $\ell$ marked points and the root. 
In that aim,  for all $1 \leq i,j \leq \ell$, we denote by $\mathsf B_n^{(i,j)}$ the  point in $T_n(\ell)$ at which the paths from the root to $X_n^{(i)}$ and  from the root to $X_n^{(j)}$ separate, with the convention that $\mathsf B_n^{(i,j)}=X_n^{(i)}$ when $X_n^{(i)}$ belongs to the path  from the root to $X_n^{(j)}$. For regularly varying sequences of lengths $(a_n)$, the tree $T_n(\ell)$ appropriately rescaled  converges to 
a ``star--tree" with $\ell$ branches with random i.i.d. lengths. More precisely:
\begin{prop}
\label{prop:Prokho}
\emph{(i)} Assume that $(a_n)$ is regularly varying with index $\alpha > 0$. Then,

\vspace{-0.1cm}
$$
\left(\left(\frac{d\big(X_n^{(i)}, \mathsf{root} \big)}{a_n}, 1 \leq i \leq \ell\right),\frac{\max_{1\leq i \neq j \leq \ell}d\big(\mathsf B_n^{(i,j)},\mathsf{root}\big)}{a_n} \right) \underset{n \rightarrow \infty}{\overset{\mathrm{(d)}}\longrightarrow} \left(\left(\xi^{(i)}_{(\alpha)},1 \leq i \leq \ell\right),0 \right) 
$$ 

where $\xi^{(1)}_{(\alpha)},\ldots,\xi^{(\ell)}_{(\alpha)}$ are i.i.d. with distribution \emph{(\ref{TransfoLapl})}.

\smallskip

\emph{(ii)} Assume that $(a_n)$ is regularly varying with index $0$ and that $\sum_{i=1}^{\infty}i^{-1}a_i=\infty$. Then, 

\vspace{-0.1cm}
$$
\left(\left(\frac{d\big(X_n^{(i)}, \mathsf{root} \big)}{\sum_{i=1}^n i^{-1} a_i}, 1 \leq i \leq \ell\right),\frac{\max_{1\leq i \neq j \leq \ell}d\big(\mathsf B_n^{(i,j)},\mathsf{root}\big)}{a_n} \right) \underset{n \rightarrow \infty}{\overset{\mathrm{\mathbb P}}\longrightarrow} \left(\left(\frac{1}{2},\ldots, \frac{1}{2}\right),0 \right).
$$ 
\end{prop}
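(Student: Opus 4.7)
The marginal convergences of $d(X_n^{(i)}, \mathsf{root})$, appropriately rescaled, follow from Theorem~\ref{thm:heightmax} (i) in part (i) and Proposition~\ref{cvTypHeight} in part (ii), applied to each marked point individually. The genuinely new content is twofold: the $\ell$ rescaled root-distances must decorrelate asymptotically into i.i.d.\ copies of $\xi_{(\alpha)}$ (respectively concentrate at $1/2$), and the heights $d(\mathsf{B}_n^{(i,j)}, \mathsf{root})$ of all pairwise branch points must be $o(a_n)$.

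The core estimate is the branch-point bound. Fix $\epsilon > 0$, set $n_\epsilon := \lfloor \epsilon n \rfloor$, and consider the projections $X_n^{(i)}(n_\epsilon)$ of the marks onto $T_{n_\epsilon}$, with $D_n^{(i)}(n_\epsilon) := d(X_n^{(i)}(n_\epsilon), \mathsf{root})$. Let $E_n^\epsilon$ be the event that every $X_n^{(i)}$ lies outside $T_{n_\epsilon}$ and that the $\ell$ projections are pairwise distinct; on $E_n^\epsilon$ each branch point $\mathsf{B}_n^{(i,j)}$ is the branching of $X_n^{(i)}(n_\epsilon)$ and $X_n^{(j)}(n_\epsilon)$ inside $T_{n_\epsilon}$, hence
\[
d(\mathsf{B}_n^{(i,j)}, \mathsf{root}) \leq \min\bigl(D_n^{(i)}(n_\epsilon), D_n^{(j)}(n_\epsilon)\bigr).
\]
By Proposition~\ref{propfunctional} each $D_n^{(i)}(n_\epsilon)/a_n$ converges in distribution to $\xi_{(\alpha)}(\epsilon)$, and the c\`adl\`ag process $(\xi_{(\alpha)}(t))_{t \geq 0}$ starting at $0$ satisfies $\xi_{(\alpha)}(\epsilon) \to 0$ in probability as $\epsilon \to 0$. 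It remains to show that $\Prob{(E_n^\epsilon)^c}$ can be made arbitrarily small by first sending $n \to \infty$ and then $\epsilon \to 0$. The probability $\Prob{X_n^{(i)} \in T_{n_\epsilon}} = S_{n_\epsilon}/S_n$ tends to $\epsilon^{\alpha+1}$, which is small with $\epsilon$; and the pairwise projection-collision probability equals $\E{\sum_y (m_y/S_n)^2}$, where $m_y$ is the total mass of the ``blob'' of $T_n$ hanging off $y \in T_{n_\epsilon}$. Since $\max_k a_k = O(a_n) = o(S_n)$ under regular variation of index $\alpha \geq 0$, a second-moment estimate on the blob sizes yields that this vanishes as $n \to \infty$.

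Once the branch-point bound is in hand, the asymptotic independence of the distances follows from the decomposition
\[
d(X_n^{(i)}, \mathsf{root}) = D_n^{(i)}(n_\epsilon) + R_n^{(i)}, \qquad R_n^{(i)} := d(X_n^{(i)}, X_n^{(i)}(n_\epsilon)),
\]
where the first term is $o(a_n)$ after letting $\epsilon \to 0$, and the residuals $R_n^{(i)}$, on $E_n^\epsilon$, live in disjoint sub-trees that are conditionally independent once the partition of $\{n_\epsilon + 1, \ldots, n\}$ into blobs is fixed. A variant of Theorem~\ref{thm:heightmax} (i) applied to each blob, which inherits a regularly-varying-of-index-$\alpha$ subsequence of lengths, then gives $R_n^{(i)}/a_n \to \xi_{(\alpha)}$ jointly and independently, closing part (i) after letting $\epsilon \to 0$. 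Part (ii) proceeds identically, using Proposition~\ref{cvTypHeight} in place of Theorem~\ref{thm:heightmax} (i) and $\sum_{i \leq n} i^{-1} a_i$ as the rescaling for the distances; the branch-point statement, which is in fact stronger than needed in this case since $a_n = o(\sum_{i \leq n} i^{-1} a_i)$, follows from the same projection argument.

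The main obstacle I foresee is the quantitative blob-size bound: ruling out a giant subtree hanging off $T_{n_\epsilon}$ whose mass would force the projections to cluster. This is a concentration statement on the random recursive attachment tree on $\{n_\epsilon + 1, \ldots, n\}$ in the regularly-varying regime, and I expect it to be handled either by a direct second-moment argument on $\sum_y m_y^2$ or by comparison to standard preferential-attachment-type estimates.
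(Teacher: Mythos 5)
Your approach diverges from the paper's in a structural way, and the divergence creates genuine gaps.

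The paper establishes the branch--point bound with almost no work: the splitting index $S_n(2)$ of two marked paths has the explicit law (\ref{cvSn2}), whose $n\to\infty$ limit is a tight distribution on $\mathbb N$ (since $\sum_i (a_i/A_i)^2<\infty$ when $\alpha\geq 0$). Therefore $d(\mathsf B_n^{(i,j)},\mathsf{root})\leq H_{S_n(2)}$ is stochastically bounded, and dividing by $a_n\to\infty$ gives the conclusion immediately. You instead project onto $T_{\lfloor \epsilon n\rfloor}$, bound $d(\mathsf B_n^{(i,j)},\mathsf{root})$ by $\min_i D_n^{(i)}(\lfloor \epsilon n\rfloor)$, and invoke Proposition \ref{propfunctional}. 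That bound is only valid on the event $E_n^\epsilon$ that the projections are pairwise distinct, and you explicitly flag that bounding $\mathbb P((E_n^\epsilon)^c)$ requires a quantitative second--moment estimate on the ``blob'' masses $m_y$, which you do not carry out. In fact this estimate is not merely a technicality: for your route to close you need $\mathbb E\big[\sum_y (m_y/A_n)^2\big]\to 0$, a nontrivial concentration claim for the random attachment process. The paper's route circumvents it entirely, and in a much simpler way, because conditioning on a \emph{fixed} index $k$ (rather than $\lfloor \epsilon n\rfloor$) already captures the branch point with probability $\to 1$ as $k\to\infty$, uniformly in $n$.

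For the asymptotic independence of the rescaled distances, the gap in your argument is more serious. You claim that, conditionally on the partition of $\{n_\epsilon+1,\dots,n\}$ into blobs, each residual $R_n^{(i)}/a_n$ converges to $\xi_{(\alpha)}$ by ``a variant of Theorem \ref{thm:heightmax} (i)'' applied to the blob. But the indices landing in a given blob are a random, non-consecutive subset of $\{n_\epsilon+1,\dots,n\}$, and nothing in the paper (or in the regular--variation toolbox) guarantees that the associated sublengths form a sequence to which the Laplace--transform computation of Lemma \ref{lemLaplace} applies; the conditional attachment within a blob depends on all the other blobs through the shared normalizer $A_i$. This is precisely what the paper's Lemma \ref{lemcond} is designed to handle: conditionally on $\mathsf B_n(\ell)\in T_k$, the increments $D_n^{(j)}-D_{k+1}^{(j)}$ are written \emph{exactly} as $\sum_{i\geq k+2} a_i V_i^{(j)} B^{(i,j)}$ with explicit, independent $\ell$--uplets $(B^{(i,1)},\dots,B^{(i,\ell)})$ distributed as (\ref{defnuplet}). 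The joint Laplace transform then factorizes into $\prod_j \exp(\phi_{(\alpha)}(\lambda_j))$ by essentially the same Riemann--sum computation as in Lemma \ref{lemLaplace}, with $a_i/A_i$ replaced by $a_i/(A_{i-1}+\ell a_i)\sim a_i/A_i$. Your blob decomposition is intuitively pointing at the same phenomenon, but it does not reduce to any established result of the paper and would require an argument of comparable length to Lemma \ref{lemcond} to make rigorous. I would recommend reading that lemma: it is the key input you are missing.
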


\bigskip

\bigskip

\textbf{Notation.} Throughout the paper, we use the notation
$$
A_n:=\sum_{i=1}^n a_i, \quad \text{for all }n \in \mathbb N.
$$

\section{Height of a typical point}

Fix $n$, and
given $T_n$, let $X_n$ be a point picked uniformly on $T_n$. 
The goal of this section is to settle different results on the distribution of the distance of this marked point to the root, mainly when the sequence $(a_n)$ is regularly varying with a  strictly positive index. Our approach entirely relies on the fact that this distance can be written as the sum of independent, non--negative random variables. More precisely, as noticed in \cite{CH15+}, 
the distances $D_n(k)$ to the root of the projections of $X_n$ onto $T_k, k \leq n$ can jointly be written in the following form: 
\begin{equation}
\label{D_n(k)}
D_n(k)=\sum_{i=1}^k a_i V_i \mathbbm 1_{\big\{U_i \leq \frac{a_i}{A_i}\big\}}, \quad \forall k \leq n,
\end{equation}
where $U_i,V_i,1 \leq i \leq n$ are all uniformly distributed on $(0,1)$ and independent. In particular,
the distance $D_n$ of $X_n$ to the root writes
\begin{equation}
\label{D_n}
D_n=\sum_{i=1}^n a_i V_i \mathbbm 1_{\left\{U_i \leq \frac{a_i}{A_i}\right\}}. 
\end{equation}
To see this, we roughly proceed as follows. Consider the projection $X_n(n-1)$ of $X_n$ onto $T_{n-1}$. By construction, it is uniformly distributed on $T_{n-1}$ given $T_{n-1}$, and then:
\begin{enumerate}
\item[$\bullet$] either $X_n \in T_{n-1}$ and $X_n(n-1)=X_n$, which occurs with probability $A_{n-1}/A_n$, 
\item[$\bullet$]  or $X_n \in T_n \backslash T_{n-1}$ and $d(X_n,X_n(n-1))=a_n V_n$ with $V_n$ uniform on $(0,1)$ and independent of $T_{n-1}$, which occurs with probability $a_n/A_n$. 
\end{enumerate}
Iterating this argument gives (\ref{D_n(k)}). An obvious consequence is that 
\begin{equation}
\label{espDn}
\mathbb E\left[D_n\right]=\frac{1}{2} \cdot \sum_{i=1}^n\frac{a_i^2}{A_i}.
\end{equation}
The rest of this section is organized as follows. In Section \ref{SecRV} we start by recalling some classical bounds for regularly varying sequences that will be used throughout the paper. The first part of Section \ref{SecOneDim} concerns the asymptotic behavior of the height $D_n$, with the proofs of Theorem \ref{thm:heightmax} (i) and its corollaries (\ref{UnifLeaf}), as well as Proposition \ref{cvTypHeight}. The second part of Section \ref{SecOneDim} is devoted to  the implementation of bounds (Lemma \ref{lem:equiv}) that will be crucial for the proof of Theorem \ref{thm:heightmax} (ii) on the behavior of the height of $T_n$, proof that will be undertaken in Section \ref{SecHeight}. Last, Section \ref{Secfunc} contains the proof of Proposition \ref{propfunctional}. 

\subsection{Bounds for regularly varying sequences}
\label{SecRV}

Assume that  $(a_n)$ is regularly varying with index $\alpha \geq 0$. We recall some classical bounds that will be useful at different places in the paper.

Fix $\varepsilon>0$. From \cite[Theorem 1.5.6 and Theorem 1.5.11]{BGT89}, there exists an integer $i_{\varepsilon}$ such that for all $n \geq i \geq i_{\varepsilon}$,
\begin{equation}
\label{regularvar1}
(1-\varepsilon)\left(\frac{i}{n}\right)^{\alpha+\varepsilon} \leq \frac{a_i}{a_n} \leq (1+\varepsilon)\left(\frac{i}{n}\right)^{\alpha-\varepsilon}
\end{equation}
and
\begin{equation}
\label{regularvar2}
\frac{(1- \varepsilon)(\alpha+1)}{i} \leq \frac{a_i}{A_i} \leq \frac{(1+\varepsilon)(\alpha+1)}{i}
\end{equation}
(\cite[Theorem 1.5.6]{BGT89} and \cite[Theorem 1.5.11]{BGT89} are stated for regularly varying functions, but can be used for regularly varying sequences, using that $f(x):=a_{\lfloor x \rfloor}$ is a  varying regularly function).

Moreover, still by  \cite[Theorem 1.5.11]{BGT89}, 
\begin{equation}
\label{regularvar3}
\frac{a_n}{\sum_{i=1}^n i^{-1}a_i } \underset{n \rightarrow \infty} \longrightarrow \alpha.
\end{equation}

\subsection{One dimensional convergence and deviations}
\label{SecOneDim}

For $\alpha>0$, recall the definition of the random variable $\xi_{(\alpha)}$ defined via its Laplace transform $$\mathbb E[\lambda \xi_{(\alpha)}]=\exp(\phi_{(\alpha)}(\lambda)), \quad \lambda \in \mathbb R$$ with $\phi_{(\alpha)}$ given by (\ref{TransfoLapl}).
With the expression (\ref{D_n}), it is easy to find the asymptotic behavior of $(D_n)$ by computing its Laplace transform and then get Theorem \ref{thm:heightmax} (i). We more precisely have:

\smallskip

\begin{lem}
\label{lemLaplace}
Assume that $(a_n)$ is regularly varying with index $\alpha>0$. Then,
\vspace{-0.3cm}
\begin{enumerate}
\item[\emph{(i)}]  For all $\lambda \in \mathbb R$
$$\mathbb E\left[\exp\left(\lambda \frac{D_n}{a_n}\right)\right] \underset{n \rightarrow \infty} \longrightarrow \exp(\phi_{(\alpha)}(\lambda)).$$
\item[\emph{(ii)}] For all $c>1$, there exists $n_{c}$ such that for all  $n \geq n_{c}$ and all $\lambda \geq 0$,
$$
\mathbb E\left[\exp\left(\lambda \frac{D_n}{a_n}\right) \right] \leq \exp\left(c(1+\alpha^{-1})\lambda \exp\left(c\lambda \right) \right).
$$
\end{enumerate}
\end{lem}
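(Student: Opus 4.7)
The strategy is to exploit the independent-sum representation (\ref{D_n}): conditioning on each indicator gives the factorization
$$\mathbb E\bigl[\exp(\lambda D_n/a_n)\bigr] \;=\; \prod_{i=1}^n \Bigl(1 + \tfrac{a_i}{A_i}\bigl(M_i^{(n)}-1\bigr)\Bigr), \qquad M_i^{(n)} \;:=\; \int_0^1 e^{\lambda a_i u/a_n}\,du,$$
so that $M_i^{(n)}-1 = \int_0^1(e^{\lambda a_i u/a_n}-1)\,du$. Setting $x_i^{(n)} := (a_i/A_i)(M_i^{(n)}-1)$, both items reduce to analyzing $\sum_i \ln(1+x_i^{(n)})$.

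I would prove (ii) first, since its estimates feed into (i). For $\lambda\geq 0$ one has $x_i^{(n)}\geq 0$ and $\ln(1+x)\leq x$; combined with $e^s-1\leq se^s$ applied inside the $u$-integral, this yields
$$M_i^{(n)}-1 \;\leq\; \tfrac{\lambda}{2}\cdot\tfrac{a_i}{a_n}\,e^{\lambda a_i/a_n}.$$
Since $\alpha>0$ forces $a_n\to\infty$, the bound (\ref{regularvar1}) together with the eventual negligibility of finitely many initial terms gives $\sup_{i\leq n}a_i/a_n\leq 1+\varepsilon$ for $n$ large. Combining with (\ref{regularvar2}) and (\ref{regularvar3})---the latter supplying $\sum_{i=1}^n a_i/(i a_n)\to 1/\alpha$---produces
$$\sum_{i=1}^n x_i^{(n)} \;\leq\; \tfrac{(1+\varepsilon)^3}{2}\cdot\tfrac{\alpha+1}{\alpha}\,\lambda\,e^{(1+\varepsilon)\lambda}.$$
Given $c>1$, choosing $\varepsilon$ with $(1+\varepsilon)^3\leq 2c$ and $1+\varepsilon\leq c$ yields (ii) for $n\geq n_c$.

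For (i), the first step is to show $\sum_i (x_i^{(n)})^2\to 0$. The symmetric bound $|M_i^{(n)}-1|\leq (|\lambda|/2)(a_i/a_n)e^{|\lambda|(1+\varepsilon)}$ combined with (\ref{regularvar2}) gives $|x_i^{(n)}|\leq C(\lambda)/i\cdot a_i/a_n$ for $i\geq i_\varepsilon$, and estimating $n^{-2(\alpha-\varepsilon)}\sum_{i=i_\varepsilon}^n i^{2(\alpha-\varepsilon)-2}$ shows this sum is $o(1)$ in all three subcases $\alpha\lessgtr 1/2$. Applying $|\ln(1+x)-x|\leq x^2$ on $|x|\leq 1/2$ (which holds uniformly in $i$ for $n$ large), one reduces (i) to computing $\lim_n \sum_i x_i^{(n)}$.

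The main technical obstacle is this last Riemann-sum identification. Writing $g(s):=(e^s-1)/s-1$, substitution of (\ref{regularvar2}) yields $\sum_i x_i^{(n)} = (1+o(1))(\alpha+1)\sum_i g(\lambda a_i/a_n)/i$. I split at $i=\lfloor\delta n\rfloor$. On the range $i>\delta n$, the uniform convergence theorem for regularly varying sequences gives $a_{\lfloor nt\rfloor}/a_n\to t^\alpha$ uniformly on $[\delta,1]$, so a standard Riemann-sum argument yields convergence to $(\alpha+1)\int_\delta^1 g(\lambda t^\alpha)\,dt/t$. On $i\leq\delta n$, the bound $|g(\lambda a_i/a_n)|\leq C(\lambda)\,a_i/a_n$ together with a second use of (\ref{regularvar3}) applied to the truncated sequence bounds the contribution by $C(\lambda)\delta^\alpha/\alpha$ uniformly in $n$. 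Passing to the limit $n\to\infty$ then $\delta\to 0$ gives
$$\lim_{n\to\infty}\sum_i x_i^{(n)} \;=\; \tfrac{\alpha+1}{\alpha}\int_0^1 \tfrac{g(\lambda w)}{w}\,dw$$
after the change of variables $w=t^\alpha$. Finally, rewriting $g(\lambda w)/w = w^{-2}\int_0^w(e^{\lambda v}-1)\,dv$ and exchanging the order of integration recovers exactly $\phi_{(\alpha)}(\lambda)$ as in (\ref{TransfoLapl}), proving (i).
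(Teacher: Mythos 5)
Your proposal is correct and takes essentially the same route as the paper: the independent-sum representation (\ref{D_n}) yields the product form of the Laplace transform, and both (i) and (ii) come down to controlling $\sum_i\ln(1+x_i^{(n)})$ via the regular-variation bounds (\ref{regularvar1})--(\ref{regularvar3}) and a Riemann-sum identification of the limit. The only noteworthy deviation is in (i): where the paper sandwiches the log directly between two Riemann sums invoking the informal $\ln(1+x)\sim x$, you make this precise by showing $\sum_i (x_i^{(n)})^2\to 0$ and then applying $|\ln(1+x)-x|\le x^2$, followed by a $\delta$-truncation with the uniform convergence theorem; this also lets you treat $\lambda\in\mathbb R$ symmetrically rather than separating $\lambda>0$ from $\lambda<0$. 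In (ii) you additionally invoke (\ref{regularvar3}) to supply $\sum_i a_i/(ia_n)\to 1/\alpha$ directly instead of re-running the Riemann sum, which is tidier but produces the same estimate. These are executional refinements, not a different method.
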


\medskip

\textbf{Proof.} (i) For all $\lambda \neq 0$, we get from (\ref{D_n}) that
\begin{eqnarray*}
\mathbb E\left[\exp\left(\lambda \frac{D_n}{a_n}\right) \right]&=&\prod_{i=1}^n \mathbb E \left[ \exp\left(\lambda \frac{a_i}{a_n} V \mathbbm 1_{\left\{U \leq \frac{a_i}{A_i} \right\}} \right) \right] \\
&=& \prod_{i=1}^n \left( 1- \frac{a_i}{A_i}+ \frac{a_i}{A_i} \frac{\left(\exp\left(\lambda \frac{a_i}{a_n}\right)-1 \right)}{\lambda \frac{a_i}{a_n}}\right)
\end{eqnarray*}
where $U,V$ are uniform on $(0,1)$ and independent (if $a_i=0$ for some $i$ we use the convention \linebreak $(\exp(0)-1)/0=1$). Now assume that $\lambda>0$ (the following lines hold similarly for $\lambda<0$ by adapting the bounds).  Using (\ref{regularvar1}), (\ref{regularvar2}) together with the fact that $\ln(1+x) \sim x $ as $x \rightarrow 0$ and that  $x \mapsto x^{-1}(\exp(x)-1)$ is increasing on $(0,\infty)$ and converges to 1 as $x \rightarrow 0$,  leads to the existence of an integer $j_{\varepsilon}$ such that for $n \geq j_{\varepsilon}$
\begin{eqnarray}
\label{ub1}
&&\nonumber c_{\varepsilon}(n) + (1-\varepsilon )^2(\alpha+1) \sum_{i=j_{\varepsilon}}^n  \frac{1}{i} \left(\left(\frac{\exp\left(\lambda (1-\varepsilon) \left(\frac{i}{n}\right)^{\alpha+\varepsilon}\right)-1}{\lambda  (1-\varepsilon)\left(\frac{i}{n}\right)^{\alpha+\varepsilon}}\right)-1\right) \\
&\leq &\nonumber \ln \left( \mathbb E\left[\exp\left(\lambda \frac{D_n}{a_n}\right) \right]\right) \\
&\leq & c_{\varepsilon}(n)  + (1+\varepsilon)^2(\alpha+1) \sum_{i=j_{\varepsilon}}^n \frac{1}{i} \left(\left(\frac{\exp\left(\lambda (1+\varepsilon) \left(\frac{i}{n}\right)^{\alpha-\varepsilon}\right)-1}{\lambda (1+\varepsilon)\left(\frac{i}{n}\right)^{\alpha-\varepsilon}}\right)-1 \right),
\end{eqnarray}
where 
$$
c_{\varepsilon}(n):= \sum_{i=1}^{j_{\varepsilon}-1} \ln \left(1- \frac{a_i}{A_i}+ \frac{a_i}{A_i} \frac{\left(\exp\left(\lambda \frac{a_i}{a_n}\right)-1 \right)}{\lambda \frac{a_i}{a_n}} \right) \underset{n \rightarrow \infty}\longrightarrow 0
$$
since $a_n \rightarrow \infty$.
Writing $\frac{1}{i}=\frac{1}{n}\times\frac{n}{i}$, we recognize  Riemann sums in the lower and upper bounds, which,
letting first $n \uparrow \infty$ and then $\varepsilon \downarrow 0$ gives
$$
\ln \left( \mathbb E\left[\exp\left(\lambda \frac{D_n}{a_n}\right) \right]\right) \underset{n \rightarrow \infty}\longrightarrow (\alpha+1) \int_0^1 \frac{1}{x}\left(\left(\frac{\exp(\lambda x^{\alpha})-1}{\lambda x^{\alpha}} \right)-1\right) \mathrm dx=:\phi_{(\alpha)}(\lambda).
$$
It is easy to see with the change of variables $y=x^{\alpha}$ in the integral and then the power series expansion of the exponential function that this expression of $\phi_{(\alpha)}(\lambda)$ indeed corresponds to (\ref{TransfoLapl}).

(ii). Fix $c>1$. Using the upper bound (\ref{ub1}) and the fact that
$$
\frac{\exp(x)-1}{x}-1 \leq x \exp(x) \quad \text{for all }x> 0
$$
we see that for all $0<\eta<\alpha$ and then for all $n$ large enough
\begin{eqnarray*}
 \ln \left( \mathbb E\left[\exp\left(\lambda \frac{D_n}{a_n}\right) \right]\right) 
 \leq  c_{\eta}(n) + (1+\eta)^3(\alpha+1) \lambda\exp(\lambda(1+\eta))   \sum_{i=j_{\eta}}^n  \frac{1}{i} \left(\frac{i}{n}\right)^{\alpha-\eta}.
 \end{eqnarray*}
 Note also, using $\ln(1+x) \leq x$, that 
 $$
 c_{\eta}(n) \leq  \sum_{i=1}^{j_{\eta}-1} \frac{a_i}{A_i} \lambda \frac{a_i}{a_n} \exp\left(\lambda \frac{a_i}{a_n}\right)
 $$
 which, clearly, is smaller than $\eta \lambda \exp(\eta \lambda)$ for $n$ large enough and all $\lambda \geq 0$. Gathering this together, we get that for all $n$ large enough (depending on $\eta$) and all $\lambda \geq 0$,
 $$
  \ln \left( \mathbb E\left[\exp\left(\lambda \frac{D_n}{a_n}\right) \right]\right) \leq \left(\eta+(1+\eta)^4 \right) \frac{1+\alpha}{\alpha-\eta} \lambda \exp\left(\lambda(1+\eta)\right).
 $$
 Taking $\eta$ small enough so that $\left(\eta+(1+\eta)^4 \right) \alpha \leq c(\alpha-\eta)$ gives the expected upper bound.
$\hfill \square$

\bigskip

\textbf{Remark (height of a uniform leaf).} We keep the notation of the Introduction and let $L_{n,\star}$ denote a leaf chosen uniformly at random amongst the $n$ leaves of $T_n$. Then the previous result implies that when $(a_n)$ is regularly varying with index $\alpha>0$,
$$\frac{d(L_{n,\star},\mathsf{root})}{a_n} \ \overset{\mathrm{(d)}}{\underset{n \rightarrow \infty}\longrightarrow} \ (1+\xi_{(\alpha)}) U^{\alpha}$$
with $U$ uniformly distributed on $(0,1)$ and independent of $\xi_{(\alpha)}$. To see this, one could use that the distribution of $(1+\xi_{(\alpha)}) U^{\alpha}$ is characterized by its positive moments (since it has exponential moments, since $\xi_{(\alpha)}$ has), together with the fact that for each $p \geq 0$, the  $p$--th moment $\mathbb E[(d(L_{n,\star},\mathsf{root})/a_n)^p]$ converges to $\mathbb E[((1+\xi_{(\alpha)}) U^{\alpha})^p]$. To prove this last convergence, note that
$$
\mathbb E\left[\left( \frac{d(L_{n,\star},\mathsf{root})}{a_n}\right)^p \right]=\frac{1}{n} \sum_{i=1}^n \mathbb E\left[\left( \frac{d(L_{i},\mathsf{root})}{a_i}\right)^p \right]\left(\frac{a_i}{a_n}\right)^p.
$$
Since $d(L_{i},\mathsf{root})-a_i$ is uniformly distributed on $T_{i-1}$ (by construction) we know from the previous lemma that, divided by $a_i$, it converges in distribution to $\xi_{(\alpha)}$, and that more precisely there is convergence of all positive and exponential moments. Together with (\ref{regularvar1}), this leads to the convergence of  $\mathbb E[(d(L_{n,\star},\mathsf{root})/a_n)^p]$ to $\mathbb E[(1+\xi_{(\alpha)})^p]/(\alpha p+1)$, as expected.

\bigskip

\textbf{Remark (other sequences $\boldsymbol{(a_n)}$).} It is easy to adapt Part (i) of the proof to get that for a general sequence $(a_n)$ of positive terms such that  $(\sum_{i=1}^n A_i^{-1}a_i^2)^{-1}\max_{1 \leq i \leq n}a_i \rightarrow 0$ as $n \rightarrow \infty$,
\begin{equation}
\label{TypHGen}
\frac{D_n}{\sum_{i=1}^n A_i^{-1}a_i^2} \overset{\mathbb P}{\underset {n \rightarrow \infty}\longrightarrow} \frac{1}{2}.
\end{equation}
It is easy to check that the above condition on $(a_n)$ holds if $(a_n)$ is regularly varying with index 0 and $\sum_{i=1}^{\infty} i^{-1}a_i=\infty$ (recall  (\ref{regularvar2}),(\ref{regularvar3})), leading in that case to
$$
\frac{D_n}{\sum_{i=1}^n i^{-1}a_i} \overset{\mathbb P}{\underset {n \rightarrow \infty}\longrightarrow}  \frac{1}{2}.
$$
In particular this recovers the first part of Theorem \ref{thm:BroutinDevroye}. To illustrate with other $0$--regularly varying sequences, consider $a_n=(\ln(n))^{\gamma}, \gamma \in \mathbb R$. Then:
$$
\left\{\begin{array}{ccccc} 
\vspace{0.2cm}
\displaystyle \frac{D_n}{(\ln(n))^{\gamma+1}} & \overset{\mathbb P}{\underset{n \rightarrow \infty} \longrightarrow} & \ \displaystyle \frac{1}{2(\gamma+1)} & \text{when } \gamma>-1 \\ 
\vspace{0.1cm}
\displaystyle \frac{D_n}{\ln(\ln(n))} & \overset{\mathbb P}{\underset{n \rightarrow \infty} \longrightarrow} &   \ \displaystyle \frac{1}{2}  & \text{when } \gamma=-1  \\ D_n   & \overset{\mathrm{a.s.}}{\underset{n \rightarrow \infty} \longrightarrow} &   D_{\infty} & \text{when } \gamma<-1, 
\end{array}\right.
$$
where the last line is due to the fact that $(D_n)$ is stochastically increasing (by (\ref{D_n})) and that $\lim_n \mathbb E[D_n]$ is finite when $\gamma<-1$ (by (\ref{espDn}),(\ref{regularvar2})), which implies that $(D_n)$ converges in distribution to a r.v. $D_{\infty}$ with finite expectation. Note that this last argument actually holds for any sequence $(a_n)$ such that $\sum_{i=1}^{\infty} A_i^{-1}a_i^2<\infty$ (which is equivalent to $\sum_{i=1}^{\infty} i^{-1}a_i<\infty$ when $(a_n)$ is regularly varying, necessarily with index 0). All these remarks lead to Proposition \ref{cvTypHeight}, using again (\ref{regularvar3}) when $\alpha>0$.

\bigskip

We come back to the case where $\alpha>0$ and note the following behavior of the maximum of $n$ i.i.d. copies of $\xi_{(\alpha)}$.

\medskip

\begin{prop}
Let $\xi_{(\alpha,1)},\ldots, \xi_{(\alpha,n)}$ be i.i.d. copies of $\xi_{(\alpha)}$. Then,
$$
\frac{\max\left\{\xi_{(\alpha,1)},\ldots, \xi_{(\alpha,n)}\right\} \times \ln(\ln(n))}{\ln(n)} \overset{\mathbb P}{\underset {n \rightarrow \infty}\longrightarrow}1.
$$
\end{prop}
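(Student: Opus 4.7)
The plan is to derive matching upper and lower tail estimates of the form $\ln \mathbb P(\xi_{(\alpha)} > x) = -x \ln x \, (1 + o(1))$ as $x \to \infty$ and then apply the standard extreme--value recipe. The main analytical input, from which everything else flows, is the asymptotic behaviour of the Laplace exponent at infinity: Laplace's method applied to the integral representation (\ref{TransfoLapl}), substituting $u = 1 - s/\lambda$ near $u = 1$ where the integrand is concentrated, yields
$$\phi_{(\alpha)}(\lambda) \ \sim \ \frac{\alpha + 1}{\alpha} \cdot \frac{e^\lambda}{\lambda^2} \quad \text{as } \lambda \to \infty,$$
which already pins down the correct normalization $\ln n / \ln \ln n$ of the statement.

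For the upper tail, Chernoff's inequality gives $\mathbb P(\xi_{(\alpha)} \geq x) \leq \exp(\phi_{(\alpha)}(\lambda) - \lambda x)$ for every $\lambda \geq 0$; choosing $\lambda = \ln x + 2 \ln \ln x$ so that $e^\lambda / \lambda^2 \asymp x$, one finds $\phi_{(\alpha)}(\lambda) = O(x)$ while $\lambda x \sim x \ln x$, whence $\mathbb P(\xi_{(\alpha)} \geq x) \leq \exp(-(1-\delta) x \ln x)$ for every $\delta > 0$ and all $x$ sufficiently large. For the lower tail, the L\'evy--Khintchine representation identifies $\xi_{(\alpha)}$ as $\int_0^1 u \, N(du)$, where $N$ is a Poisson random measure on $(0,1)$ with intensity $\nu(du) = \frac{\alpha+1}{\alpha} \cdot \frac{1-u}{u} du$. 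Fixing $\varepsilon \in (0,1)$, the number of atoms of $N$ lying in $[1-\varepsilon, 1]$ is Poisson with finite mean $\mu_\varepsilon := \nu([1 - \varepsilon, 1])$, and whenever this count is at least $k$ we have $\xi_{(\alpha)} \geq k(1 - \varepsilon)$; Stirling's formula gives
$$\mathbb P\bigl(\mathrm{Poisson}(\mu_\varepsilon) \geq k\bigr) \ \geq \ \frac{e^{-\mu_\varepsilon} \mu_\varepsilon^k}{k!} \ \geq \ \exp\bigl(-k \ln k \, (1 + o(1))\bigr),$$
and setting $x = k(1 - \varepsilon)$ yields $\mathbb P(\xi_{(\alpha)} \geq x) \geq \exp(-(1+\delta) x \ln x)$ for any $\delta > 0$ and $x$ large.

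To conclude, set $y_n^{\pm} := (1 \pm \varepsilon) \ln n / \ln \ln n$ and observe that $y_n^{\pm} \ln y_n^{\pm} = (1 \pm \varepsilon) \ln n \, (1 + o(1))$. The upper tail estimate gives $n \, \mathbb P(\xi_{(\alpha)} > y_n^+) \to 0$, so $\max_i \xi_{(\alpha,i)} \leq y_n^+$ with probability tending to $1$ by a union bound; the lower tail estimate gives $n \, \mathbb P(\xi_{(\alpha)} > y_n^-) \to \infty$, and by independence
$$\mathbb P\Bigl(\max_i \xi_{(\alpha,i)} \leq y_n^-\Bigr) \ = \ \bigl(1 - \mathbb P(\xi_{(\alpha)} > y_n^-)\bigr)^n \ \leq \ \exp\bigl(-n \, \mathbb P(\xi_{(\alpha)} > y_n^-)\bigr) \ \to \ 0.$$
Since $\varepsilon > 0$ is arbitrary, the convergence in probability follows. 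The main obstacle in this programme is the Laplace--method asymptotics of $\phi_{(\alpha)}$ together with the quantitative bookkeeping in the Chernoff optimization; once those are in hand, the Poisson lower bound on the tail and the extreme--value conclusion are entirely routine.
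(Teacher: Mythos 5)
Your proof is correct, and it takes a genuinely different (and more self-contained) route for the key tail estimate. The paper obtains the two-sided asymptotic $\ln \mathbb P(\xi_{(\alpha)}>x) = -x\ln x\,(1+o(1))$ in one stroke by quoting \cite[Theorem~8.2.3]{BGT89}, a Tauberian-type result on tails of infinitely divisible laws whose L\'evy measure has bounded support (here $[0,1]$), and then runs the same extreme-value computation you do. You instead prove the tail estimate by hand: the Laplace-method asymptotic $\phi_{(\alpha)}(\lambda)\sim\frac{\alpha+1}{\alpha}\,e^\lambda/\lambda^2$ together with a Chernoff bound with $\lambda=\ln x+2\ln\ln x$ gives the upper tail, and the compound-Poisson / atom-counting argument near $u=1$ (with $\nu(du)=\frac{\alpha+1}{\alpha}\frac{1-u}{u}\,du$, noting $\nu([1-\varepsilon,1])<\infty$ even though $\nu$ has infinite total mass) gives the matching lower tail. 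What you gain is transparency: the argument is elementary, exposes exactly how the bound $1$ on the support of $\nu$ produces the exponent $1$ in $x\ln x$, and requires no external black box. What the paper's route buys is brevity and immediate generality, since the cited theorem handles the whole class of such distributions uniformly. Both are complete and lead to identical final extreme-value steps (your union bound plus $(1-p)^n\leq e^{-np}$ is the same estimate the paper writes as $\ln\mathbb P(\max\leq\cdot)\sim -n\,\mathbb P(\xi_{(\alpha)}>\cdot)$).
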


\medskip

\textbf{Proof.}
From (\ref{TransfoLapl}), we know that the random variable $\xi_{(\alpha)}$ is infinitely divisible and the support of its Lévy measure is $[0,1]$. By \cite[Theorem 8.2.3]{BGT89}, this implies that
$$
\exp\left(\lambda x \ln(x)\right) \mathbb P\left(\xi_{(\alpha)}>x\right) \underset{x \rightarrow \infty}\longrightarrow 0 \quad \text{when } \lambda<1 
$$
and
$$
\exp\left(\lambda x \ln(x)\right) \mathbb P\left(\xi_{(\alpha)}>x\right) \underset{x \rightarrow \infty}\longrightarrow \infty \quad \text{when } \lambda>1.
$$
Besides, the independence of the $\xi_{(\alpha,i)},1 \leq i \leq n$ leads to
$$
\ln \left(\mathbb P \left(\max\left\{\xi_{(\alpha,1)},\ldots, \xi_{(\alpha,n)}\right\} \leq u \frac{\ln(n)}{\ln(\ln(n))} \right)\right) \underset{n \rightarrow \infty} \sim -n \mathbb P\left(\xi_{(\alpha)} > u \frac{\ln(n)}{\ln(\ln(n))}\right),
$$
for all $u>0$. With the above estimates, it is straightforward that the right--hand side converges to 0 when $u>1$ and to $-\infty$ when $u<1$.
$\hfill \square$

\bigskip

We will not directly use this result later in the paper, but this may be seen as a hint that the height $H_n$ may be asymptotically proportional to $n^{\alpha}\ln(n)/\ln(\ln(n))$. To prove this rigorously, we will actually use the following estimates. 

\bigskip

\begin{lem}
\label{lem:equiv}
Assume that $(a_n)$ is regularly varying with index $\alpha>0$ and fix $\gamma>0$.
\vspace{-0.3cm}
\begin{enumerate}
\item[\emph{(i)}]
Then for all $\gamma'<\gamma$,
$$
n^{\gamma'} \mathbb P\left(\frac{D_n}{a_n}>\gamma \frac{\ln(n)}{\ln(\ln(n))}\right) \underset{n \rightarrow \infty}\longrightarrow 0 
$$
whereas for all $\gamma'>\gamma$, 
$$
n^{\gamma'} \mathbb P\left(\frac{D_n}{a_n}>\gamma \frac{\ln(n)}{\ln(\ln(n))}\right) \underset{n \rightarrow \infty}\longrightarrow \infty. 
$$

\item[\emph{(ii)}] Fix $c\in (0,1)$. Then for all $\gamma'<\gamma$
$$
n^{\gamma'} \mathbb P\left(\frac{D_n-D_{n}(\lfloor nc \rfloor)}{a_n}>\gamma \frac{\ln(n)}{\ln(\ln(n))}\right) \underset{n \rightarrow \infty}\longrightarrow 0 
$$
whereas for all $\gamma'>\gamma$, 
$$
n^{\gamma'} \mathbb P\left(\frac{D_n-D_{n}(\lfloor nc \rfloor)}{a_n}>\gamma \frac{\ln(n)}{\ln(\ln(n))}\right) \underset{n \rightarrow \infty}\longrightarrow \infty. 
$$
\end{enumerate}
\end{lem}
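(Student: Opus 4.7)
We focus on part (i); part (ii) follows from the same arguments after observing that $D_n - D_n(\lfloor nc\rfloor) = \sum_{i=\lfloor nc\rfloor +1}^n a_i V_i \mathbbm 1_{\{U_i\leq a_i/A_i\}}$ has the same structure as $D_n$ with the sum restricted to $i>\lfloor nc\rfloor$. In particular, $D_n-D_n(\lfloor nc\rfloor)\leq D_n$, so the ``$\to 0$'' direction of (ii) follows immediately from the ``$\to 0$'' direction of (i). By the same Riemann sum analysis as in the proof of Lemma \ref{lemLaplace}(i), the log-moment generating function of $(D_n-D_n(\lfloor nc\rfloor))/a_n$ converges to $(\alpha+1)\int_c^1 u^{-1}((e^{\lambda u^\alpha}-1)/(\lambda u^\alpha)-1)\,\mathrm du$, which has the same leading-order asymptotics $\sim (\alpha+1)e^\lambda/(\alpha\lambda^2)$ as $\phi_{(\alpha)}(\lambda)$ when $\lambda\to\infty$ (the tail being dominated by $u$ close to $1$); hence the ``$\to\infty$'' direction of (ii) is obtained by the same tilting argument below.

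\textbf{Upper direction ($\gamma'<\gamma$).} Set $x_n:=\gamma \ln n/\ln\ln n$ and $M_n(\lambda):=\mathbb E[\exp(\lambda D_n/a_n)]$. Markov's inequality combined with Lemma \ref{lemLaplace}(ii) gives, for any $\kappa>1$ and $n$ sufficiently large,
$$\mathbb P(D_n/a_n > x_n) \leq \exp(-\lambda x_n + \kappa(1+\alpha^{-1}) \lambda e^{\kappa \lambda}), \quad \lambda\geq 0.$$
Choosing $\lambda_n\sim \kappa^{-1}\ln\ln n$ (more precisely the solution of $\kappa(1+\alpha^{-1})(1+\kappa\lambda) e^{\kappa\lambda} = x_n$) and simplifying shows the right-hand side is at most $n^{-\gamma/\kappa + o(1)}$. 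For $\gamma'<\gamma$, pick $\kappa\in(1,\gamma/\gamma')$; then $n^{\gamma'}\mathbb P(D_n/a_n > x_n)\to 0$.

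\textbf{Lower direction ($\gamma'>\gamma$).} We use Cram\'er exponential tilting. Define the tilted probability $Q_\lambda$ via $dQ_\lambda/d\mathbb P = e^{\lambda D_n/a_n}/M_n(\lambda)$, so that $\mathbb E_{Q_\lambda}[D_n/a_n] = (\ln M_n)'(\lambda)$ and $\mathrm{Var}_{Q_\lambda}(D_n/a_n) = (\ln M_n)''(\lambda)$. For any $\delta>0$,
$$\mathbb P(D_n/a_n\in[x_n, x_n+\delta]) \geq M_n(\lambda) e^{-\lambda(x_n+\delta)} Q_\lambda(D_n/a_n\in[x_n, x_n+\delta]).$$
Choose $\lambda_n$ at the saddle point $(\ln M_n)'(\lambda_n)=x_n$. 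A direct computation based on (\ref{TransfoLapl}) yields $\phi_{(\alpha)}'(\lambda)\sim\phi_{(\alpha)}''(\lambda)\sim (\alpha+1)e^\lambda/(\alpha\lambda^2)$ as $\lambda\to\infty$; in particular $\lambda_n\sim\ln\ln n$, and the Legendre transform $\phi_{(\alpha)}^*(x):=\sup_\lambda(\lambda x-\phi_{(\alpha)}(\lambda))$ satisfies $\phi_{(\alpha)}^*(x_n)\sim x_n\ln x_n\sim \gamma\ln n$. Take $\delta_n:=A\sqrt{\phi_{(\alpha)}''(\lambda_n)}\sim A\sqrt{x_n}$ with $A$ a large constant; Chebyshev under $Q_{\lambda_n}$ then gives $Q_{\lambda_n}(D_n/a_n\in[x_n,x_n+\delta_n])\geq 1/2$ eventually. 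Since $\lambda_n\delta_n=O(\sqrt{\ln n\cdot\ln\ln n})=o(\ln n)$, we conclude
$$\mathbb P(D_n/a_n>x_n)\geq \tfrac12 \exp(-\phi_{(\alpha)}^*(x_n)(1+o(1))-\lambda_n\delta_n) = \exp(-\gamma\ln n(1+o(1))),$$
yielding $n^{\gamma'}\mathbb P(D_n/a_n>x_n)\to\infty$ for any $\gamma'>\gamma$.

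\textbf{Main obstacle.} The crux is extending Lemma \ref{lemLaplace}(i) from fixed $\lambda$ to the diverging saddle point $\lambda_n\sim\ln\ln n$, and obtaining analogous estimates on $(\ln M_n)'(\lambda_n)$ and $(\ln M_n)''(\lambda_n)$. These refinements require a quantitative version of the Riemann sum argument, using the bounds (\ref{regularvar1})--(\ref{regularvar2}) to control the error terms uniformly for $\lambda$ in a slowly growing range; that uniform control is the technical heart of the proof.
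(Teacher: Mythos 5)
Your upper direction ($\gamma' < \gamma$) is correct and is essentially the paper's argument: Markov's inequality plus the uniform Laplace bound of Lemma~\ref{lemLaplace}(ii) with $\lambda$ proportional to $\ln\ln n$. (The paper simply takes $\lambda_n = a\ln\ln n$ for a constant $a$ slightly larger than $\gamma'/\gamma$ rather than solving a saddle equation; the conclusion is the same.) Your reduction ``$\to 0$ in (ii) follows from (i), $\to\infty$ in (i) follows from (ii)'' is also exactly the one the paper uses.

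The lower direction, however, is where you depart substantially from the paper, and it is where the proposal is incomplete. You yourself flag the issue: the Cram\'er tilting argument requires controlling $(\ln M_n)'(\lambda_n)$ and $(\ln M_n)''(\lambda_n)$, and the ratio $M_n(\lambda_n) e^{-\lambda_n x_n}$, at a \emph{diverging} tilt $\lambda_n\sim\ln\ln n$. Lemma~\ref{lemLaplace}(i) gives only pointwise convergence $\ln M_n(\lambda)\to\phi_{(\alpha)}(\lambda)$ at fixed $\lambda$, and Lemma~\ref{lemLaplace}(ii) gives only a one-sided \emph{upper} bound; neither furnishes a matching lower bound on $\ln M_n(\lambda_n)$, nor control of the second derivative needed for the Chebyshev step under $Q_{\lambda_n}$. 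Establishing these quantitatively uniform estimates is not a routine ``refinement of the Riemann sum argument'': the error introduced by replacing $a_i/a_n$ by $(i/n)^{\alpha}$ and $a_i/A_i$ by $(\alpha+1)/i$ gets exponentiated by the tilt, so one must show the relative error stays $o(1/\ln n)$ uniformly over $i\leq n$ and $\lambda\leq C\ln\ln n$. That is the missing lemma, and without it the lower bound does not go through. Also, the claim that the tilting argument transfers verbatim to $D_n-D_n(\lfloor nc\rfloor)$ is asserted rather than proved; while the limiting cumulant generating function has the same leading tail, this again needs the quantitative uniformity.

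The paper avoids all of this by a more elementary route: it bounds $D_n - D_n(\lfloor nc\rfloor)$ from below stochastically by a constant multiple of a binomial $\mathrm{Bin}(\lfloor an\rfloor, b/n)$ (using the two-sided regular variation bounds (\ref{regularvar1})--(\ref{regularvar2}) to pass from the $a_i/a_n$ and $a_i/A_i$ to deterministic constants on the range $i\in(\lfloor dn\rfloor, n]$), and then reads off the tail $n^{-\gamma/((1-\varepsilon)^2 d^{\alpha+\varepsilon})+o(1)}$ directly from Stirling's formula. This sidesteps the need for any uniform-in-$\lambda$ MGF estimates; the only large-deviation input it needs is the crude exponential bound on the lower tail of $\mathrm{Bin}(n-\lfloor dn\rfloor,\varepsilon)$, which is standard. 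If you want to complete your proof, you either need to prove the uniform MGF estimates you identify as the obstacle, or switch to this stochastic-domination-by-binomial approach, which is both simpler and more robust.
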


\textbf{Proof.} Of course, since $D_n-D_{n}(\lfloor nc \rfloor) \leq D_n$, we only need to prove the convergence to 0 in (i) for $\gamma'<\gamma$ and the convergence to $\infty$ in (ii) for $\gamma'>\gamma$.

{(i)} Let $\gamma'<\gamma$ and take $a,d$ such that $a>\gamma'/\gamma,d>1$ and $ad<1$. From the upper bound of Lemma \ref{lemLaplace}
 (ii), we see that for $n$ large enough
\begin{eqnarray*}
n^{\gamma'} \mathbb P \left(\frac{D_n}{a_n} > \gamma \frac{ \ln(n)}{\ln(\ln(n))} \right) &=& n^{\gamma'}  \mathbb P \left(a\ln(\ln(n))\frac{D_n}{a_n} > a \gamma \ln(n) \right) \\
&\leq & \exp(\gamma' \ln(n)) \cdot \mathbb E \left[\exp\left( a \ln(\ln(n))\frac{D_n}{a_n}\right) \right] \cdot  \exp(-a\gamma \ln(n)) \\
&\underset{\mathrm{Lemma \ref{lemLaplace} (ii)}}\leq & \exp\Big( \ln(n) \times (\gamma' -a \gamma) + (1+\alpha^{-1}) ad\ln(\ln(n))(\ln(n))^{ad} \Big)
\end{eqnarray*}
and this converges to 0 since $ad<1$ and $a\gamma>\gamma'$.

{(ii)} Let $\gamma'>\gamma$. We will (stochastically) compare the random variable $a_n^{-1}(D_n-D_n(\lfloor nc \rfloor))$ with a binomial $\mathrm{Bin}(\lfloor a n \rfloor,b/n)$ distribution, with appropriate $a,b>0$. A simple application of Stirling's formula will then lead to the expected result. Recall from (\ref{D_n}) and (\ref{D_n(k)}) that
$$
\frac{D_n-D_n(\lfloor nc\rfloor)}{a_n}=\sum_{i=\lfloor nc \rfloor+1}^n \frac{a_i}{a_n} V_i \mathbbm 1_{\left\{U_i \leq \frac{a_i}{A_i}\right\}},
$$
with $U_i,V_i,i \geq 1$ i.i.d. uniform on $(0,1)$. Then note from (\ref{regularvar1}) and (\ref{regularvar2})  that  for all $\varepsilon,d \in (0,1)$
$$
\sum_{i=\lfloor dn \rfloor+1}^n \frac{a_i}{a_n} V_i \mathbbm 1_{\left\{U_i \leq \frac{a_i}{A_i}\right\}} \geq (1-\varepsilon) d^{\alpha+\varepsilon} \sum_{\lfloor dn \rfloor +1}^n V_i \mathbbm 1_{\left \{U_i \leq \frac{\alpha+1}{2n}\right \}}
$$
provided that $n$ is large enough. Now take $\varepsilon \in (0,1)$ small enough and $d \in (c,1)$ large enough so that $ \gamma<(1-\varepsilon)^2 d^{\alpha+\varepsilon}\gamma'$. Setting $N_{n,\varepsilon,d}:=\sum_{\lfloor dn \rfloor +1}^n \mathbbm1_{\left\{V_i \geq 1-\varepsilon\right\}}$, we have,
\begin{eqnarray}
\nonumber
&& \mathbb P\left(\frac{D_n-D_{n}(\lfloor nc \rfloor)}{a_n}>\gamma \frac{\ln(n)}{\ln(\ln(n))}\right) \\
\nonumber
&\geq &  \mathbb P\left((1-\varepsilon) d^{\alpha+\varepsilon} \sum_{\lfloor dn \rfloor +1}^n V_i \mathbbm 1_{\left \{U_i \leq \frac{\alpha+1}{2n} \right\}}
 >\gamma \frac{\ln(n)}{\ln(\ln(n))}\right) \\
 \nonumber
 &\underset{(U_i) \text{ indep. } (V_i)}\geq & \mathbb P\left((1-\varepsilon)^2 d^{\alpha+\varepsilon} \sum_{i=1}^{\lfloor \varepsilon (1-d)n/2 \rfloor} \mathbbm 1_{\left\{U_i \leq \frac{\alpha+1}{2n}\right\}}
 >\gamma \frac{\ln(n)}{\ln(\ln(n))}, N_{n,\varepsilon,d} \geq \frac{\varepsilon(1-d)  n}{2}  \right) \\
 \label{lowerboundBin}
 &\geq & \mathbb P\left(\mathrm{Bin}\left(\left\lfloor \frac{\varepsilon (1-d)n}{2} \right\rfloor,\frac{\alpha+1}{2n}\right) >\frac{\gamma}{(1-\varepsilon)^2 d^{\alpha+\varepsilon}} \frac{\ln(n)}{\ln(\ln(n))} \right) \\
 \nonumber
 &&\hspace{-0.3cm}-\mathbb P\left( \mathrm{Bin}\left(n-\lfloor dn \rfloor, \varepsilon\right) < \frac{\varepsilon (1-d)n}{2}  \right).
\end{eqnarray}
One the one hand, the theory of large deviations for the binomial distribution gives
$$
\mathbb P\left( \mathrm{Bin}\left(n-\lfloor dn \rfloor, \varepsilon\right) < \varepsilon (1-d)n /2 \right) \leq \exp(-hn), 
$$
with $h>0$. On the other hand, a simple application of Stirling's formula implies that
\begin{equation}
\label{minoBinom}
\mathbb P\left(\mathrm{Bin}\left(\left\lfloor \frac{\varepsilon (1-d)n}{2} \right\rfloor,\frac{\alpha+1}{2n}\right) >\frac{\gamma}{(1-\varepsilon)^2 d^{\alpha+\varepsilon}} \frac{\ln(n)}{\ln(\ln(n))} \right) \geq n^{-\frac{\gamma}{(1-\varepsilon)^2 d^{\alpha+\varepsilon}}+\circ(1)}
\end{equation}
(this is well--known, a proof is given below). 
Together with the lower bound (\ref{lowerboundBin}), these two facts indeed lead to  
$$
n^{\gamma'}  \mathbb P\left(\frac{D_n-D_{n}(\lfloor nc \rfloor)}{a_n}>\gamma \frac{\ln(n)}{\ln(\ln(n))}\right) \underset{n \rightarrow \infty}\longrightarrow \infty
$$
since $ \gamma<(1-\varepsilon)^2 d^{\alpha+\varepsilon}\gamma'$.
We finish with a quick proof of (\ref{minoBinom}).
More generally, let $a,b,x>0$. Then
$$
\mathbb P\left(\mathrm{Bin}\left(\lfloor an \rfloor, \frac{b}{n}\right) > \frac{x\ln(n)}{\ln(\ln(n))} \right) \geq \binom{\lfloor an \rfloor}{\left \lfloor  \frac{x\ln(n)}{\ln(\ln(n))} \right\rfloor+1}\left(\frac{b}{n} \right)^{\left \lfloor  \frac{x\ln(n)}{\ln(\ln(n))} \right\rfloor+1}\left(1-\frac{b}{n}\right)^{\left \lfloor  \frac{x\ln(n)}{\ln(\ln(n))} \right\rfloor+1}.
$$
Using Stirling's formula, the binomial term rewrites
$$
\binom{\lfloor an \rfloor}{\left \lfloor  \frac{x\ln(n)}{\ln(\ln(n))} \right\rfloor+1}=\exp\left(\left( \left \lfloor  \frac{x\ln(n)}{\ln(\ln(n))} \right\rfloor+1 \right)\left(\ln(an)-\ln\left(\frac{x\ln(n)}{\ln(\ln(n))}\right)+1+\circ(1) \right) \right).
$$
Hence,
\begin{eqnarray*}
&&\mathbb P\left(\mathrm{Bin}\left(\lfloor an \rfloor, \frac{b}{n}\right) > \frac{x\ln(n)}{\ln(\ln(n))} \right) \\
&\geq& \exp\left(\left( \left \lfloor  \frac{x\ln(n)}{\ln(\ln(n))} \right\rfloor+1 \right)\left(\ln(an)-\ln\left(\frac{x\ln(n)}{\ln(\ln(n))}\right)+1+\ln\left(\frac{b}{n}\right)+\circ(1) \right) \right)\\
&=& \exp \big(-x \ln(n) (1+\circ(1))\big).
\end{eqnarray*}
$\hfill \square$

\subsection{Functional convergence}
\label{Secfunc}

In this section we prove Proposition \ref{propfunctional}. To lighten notation, we let for all $n \in \mathbb N$
$$
\xi_n(t):=\frac{D_n(\lfloor nt \rfloor)}{a_n}=\frac{\sum_{i=1}^{\lfloor nt \rfloor} a_i V_i \mathbbm 1_{\{U_i \leq a_i/A_i\}}}{a_n}, \quad  \quad 0 \leq t \leq 1,
$$
where $U_i,V_i,1 \leq i \leq n$ are i.i.d. uniform on $(0,1)$
(recall the construction (\ref{D_n(k)})). 
Our goal is to prove that the process $(\xi_n)$ converges to the process $\xi_{(\alpha)}$ defined by (\ref{processXi}) for the Skorokhod topology on $D([0,1],\mathbb R_+)$. We start by proving the finite--dimensional convergence, relying on manipulations done in Section \ref{SecOneDim}. Then we use Aldous' tightness criterion to conclude that the convergence holds with respect to the topology of Skorokhod. 

\medskip

\textbf{Finite--dimensional convergence.} The processes $\xi_n, n \geq 1$ and $\xi_{(\alpha)}$ all have independent increments, by construction. It remains to prove that 
$$
\xi_n(t)-\xi_n(s) \underset{n \rightarrow \infty}{\overset{(\mathrm d)}\longrightarrow} \xi_{(\alpha)}(t)-\xi_{(\alpha)}(s)
$$
for all $0 \leq s \leq t \leq 1$. From the proof of Lemma \ref{lemLaplace} (i), we immediately get that for all $\lambda \geq 0$
\begin{eqnarray*}
\mathbb E\left[ \exp\left(\lambda \big(\xi_n(t)-\xi_n(s)\big)\right)\right]&=&\prod_{i=\lfloor ns \rfloor+1}^{\lfloor nt \rfloor} \mathbb E \left[ \exp\left(\lambda \frac{a_i}{a_n} V \mathbbm 1_{\left\{U \leq \frac{a_i}{A_i} \right\}} \right) \right] \\
&\underset{n\rightarrow \infty}\rightarrow &  \exp\left((\alpha+1) \int_s^{t} \frac{1}{x}\left(\left(\frac{\exp(\lambda x^{\alpha})-1}{\lambda x^{\alpha}} \right)-1\right) \mathrm dx\right).
\end{eqnarray*}
On the other hand, Campbell's theorem applied to the Poisson point process $(t_i,v_i)$ on $(0,\infty)^2$   with intensity $(\alpha+1)t^{-\alpha-1} \mathbbm 1_{\{v \leq t^{\alpha}\}} \mathrm dt \mathrm dv $ implies that for all $\lambda \geq 0$
$$
\mathbb E\left[\exp\left( \lambda(\xi_{(\alpha)} (t)-\xi_{\alpha} (s)\right) \right]=\exp\left((\alpha+1) \int_{s}^{t} \int_0^{x^{\alpha}}\left(\exp(\lambda v)-1\right) \mathrm dv \frac{\mathrm dx}{x^{1+\alpha}}\right)
$$
which indeed coincides with the above limit of $\mathbb E\left[ \exp\left(\lambda \big(\xi_n(t)-\xi_n(s)\big)\right)\right]$. 

\medskip

\textbf{Tightness.} We use Aldous' tightness criterion (\cite[Theorem 16.10]{Bill99}) that ensures that $(\xi_n)$ is tight with respect to the  Skorokhod topology on $D([0,1],\mathbb R_+)$  if:  
\begin{enumerate}
\item[$\bullet$] $\lim_{c \rightarrow \infty} \limsup_{n \rightarrow \infty}\mathbb P(\sup_{t \in [0,1]} \xi_n(t) >c)=0$
\item[$\bullet$] and for all $\varepsilon>0$
\begin{equation}
\label{Aldous}
\lim_{\delta \rightarrow 0}\limsup _{n \rightarrow \infty}\sup_{\tau \in \mathsf S_n} \sup_{0 \leq \theta \leq \delta}\mathbb P\big(\left|\xi_n\left((\tau+\theta)\wedge 1\right)-\xi_n\left(\tau \right) \right|>\varepsilon \big) = 0
\end{equation}
where $\mathsf S_n$ is the set of stopping times with respect to the filtration generated by the process $\xi_n$. 
\end{enumerate}
The first point is obvious, since the processes $\xi_n$ are non--decreasing and we already know that $\xi_n(1)$ converges in distribution. For the second point, 
note that if $\tau \in \mathsf S_n$, then $\lfloor n \tau\rfloor$ is a stopping time with respect to the filtration generated by the process $(D_n(k),0 \leq k \leq n)$. Hence
\begin{eqnarray*}
\sup_{0 \leq \theta \leq \delta} \mathbb P\big(\left|\xi_n\left((\tau+\theta)\wedge 1\right)-\xi_n\left(\tau \right) \right|>\varepsilon\big) &=& 
\mathbb P \big( \xi_n((\tau+\delta)\wedge 1)-\xi_n(\tau)>\varepsilon \big) \\
&\leq& \sum_{k=0}^n \mathbb P(\lfloor n\tau \rfloor=k) \mathbb P\left(\sum_{i=k+1}^{k+1+\lfloor n\delta \rfloor}\frac{a_i}{a_n}V_i \mathbbm 1_{\{U_i \leq \frac{a_i}{A_i}\}}>\varepsilon  \right).
\end{eqnarray*}
We may assume that $\varepsilon<\alpha$. Then, using $\mathbb P(X>\varepsilon) \leq \varepsilon^{-1} \mathbb E[X]$ for any non--negative r.v. $X$, we get
\begin{eqnarray*}
\mathbb P\left(\sum_{i=k+1}^{k+1+\lfloor n\delta \rfloor)}\frac{a_i}{a_n}V^i \mathbbm 1_{\{U_i \leq \frac{a_i}{A_i}\}} >\varepsilon\right) 
&\leq& \frac{1}{2\varepsilon} \sum_{i=k+1}^{k+1+\lfloor n\delta \rfloor} \frac{a_i^2}{a_nA_i} \\
&\underset{\text{by } (\ref{regularvar1}),(\ref{regularvar2}), \text{ for $n \geq n_{\varepsilon}$ and all $k \leq n$}}\leq & \frac{C_{\alpha,\varepsilon}}{n^{\alpha-\varepsilon}}\sum_{i=k+1}^{k+1+ \lfloor n\delta \rfloor} i^{\alpha-\varepsilon-1} \\
&\underset{\text{for $n \geq n_{\varepsilon}$ and all $k\leq n$}}\leq & C_{\alpha,\varepsilon} \max (\delta^{\alpha-\varepsilon}, \delta)
\end{eqnarray*}
where $C_{\alpha,\varepsilon}$ depends only on $\alpha,\varepsilon$.
To get the last line we have used that either $\alpha-\varepsilon-1 \geq 0$ and then (since $k+1 \leq 2n$)
$$
\frac{1}{n^{\alpha-\varepsilon}}\sum_{i=k+1}^{k+1+ \lfloor n\delta \rfloor} i^{\alpha-\varepsilon-1} \leq \frac{((2+\delta)n)^{\alpha-\varepsilon-1} n\delta}{n^{\alpha-\varepsilon}}=(2+\delta)^{\alpha-\varepsilon-1} \delta.
$$
Or $\alpha-\varepsilon-1 < 0$ and then
$$
\frac{1}{n^{\alpha-\varepsilon}}\sum_{i=k+1}^{k+1+ \lfloor n\delta \rfloor} i^{\alpha-\varepsilon-1} \leq \frac{\min\big((k+1+n\delta)^{\alpha-\varepsilon},(k+1)^{\alpha-\varepsilon-1}n\delta \big)}{(\alpha-\varepsilon)n^{\alpha-\varepsilon}} \leq \frac{(2\delta)^{\alpha-\varepsilon}}{\alpha-\varepsilon}
$$
where the last inequality is obtained by considering the first term in the minimum when $k+1\leq n\delta$ and the second term when $k+1> n\delta$.

In conclusion, we have proved that for all $n$ large enough and all stopping times $\tau \in \mathsf S_n$,
$$
\sup_{0 \leq \theta \leq \delta} \mathbb P\big(\left|\xi_n\left((\tau+\theta)\wedge 1\right)-\xi_n\left(\tau \right) \right|>\varepsilon\big) \leq  C_{\alpha,\varepsilon} \max (\delta^{\alpha-\varepsilon}, \delta).
$$
which gives (\ref{Aldous}).

\section{Multiple marking}
\label{SecMM}

In order to prove Theorem \ref{thm:heightmax} (ii), we need the joint distribution of the paths from the root to two points marked independently, uniformly in the tree $T_n$. This is studied in Section \ref{sec:Marking2}.  Then in Section \ref{sec:Markingk}, we turn to $\ell$ marked points and the proof of Proposition \ref{prop:Prokho}. 

\subsection{Marking two points}
\label{sec:Marking2}

The result of this section are available for any sequence $(a_n)$ of positive terms. 

Given $T_n$, let $X_n^{(1)},X_n^{(2)}$ denote two points taken independently and uniformly in $T_n$, and  $D_n^{(1)},D_n^{(2)}$ their respective distances to the root. For all $1 \leq k \leq n$, let also $D^{(1)}_n(k)$ (resp. $D^{(2)}_n(k)$) denote the distance to the root of the projection of $X_n^{(1)}$ (resp.  $X_n^{(2)}$) onto $T_k \subset T_n$.
Our goal is to describe the joint distribution of the paths $\big(\big(D_n^{(1)}(k),D_n^{(2)}(k)\big),1\leq k \leq n\big)$ -- we recall that the marginals are given by (\ref{D_n(k)}). In that aim, we introduce a sequence $\big(B^{(i,1)},B^{(i,2)}\big),  i \geq 1$ of  independent pairs of random variables defined by:
\begin{equation}
\label{defpair}
\vspace{0.1cm} \left\{\begin{array}{ccc}\mathbb P\left((B^{(i,1)},B^{(i,2)})=(1,1)\right) & =  & 0 \\ \vspace{0.1cm}\mathbb P\left((B^{(i,1)},B^{(i,2)})=(1,0)\right) & = & \frac{a_i}{A_{i}+a_i} \\ \vspace{0.1cm}\mathbb P\left((B^{(i,1)},B^{(i,2)})=(0,1)\right) & = & \frac{a_i}{A_{i}+a_i} \\ \mathbb P\left((B^{(i,1)},B^{(i,2)})=(0,0)\right) & = & \frac{A_{i-1}}{A_{i}+a_i}.\end{array}\right.
\end{equation} 
Note the two following facts (which will be useful later on):

\noindent $\bullet$ $B^{(i,1)}$ (resp. $B^{(i,2)}$) is stochastically smaller than a Bernoulli r.v. with success parameter $a_i/A_i$

\noindent $\bullet$ the distribution of $B^{(i,1)}$  given that $B^{(i,2)}=0$ (resp. $B^{(i,2)}$  given that $B^{(i,1)}=0$) is a Bernoulli r.v. with success parameter $a_i/A_i$.

\bigskip

\begin{lem}
\label{Twomarked}
Let $U_i,V_i,V_i^{(1)},V_i^{(2)},i\geq 1$ be independent r.v. uniformly distributed on $(0,1)$, all independent of a sequence $((B^{(i,1)},B^{(i,2)}),i \geq 1)$ of independent pairs of Bernoulli r.v. distributed as (\ref{defpair}). Then for all $n \geq 1$ and all  bounded continuous functions $f:\mathbb R^{2 \times n}  \rightarrow \mathbb R$,
\begin{eqnarray}
\label{sumkappa}
&&\mathbb E\left[ f\left(\big(D_n^{(1)}(k), D_n^{(2)}(k)\big),1 \leq k \leq n \right) \right] \\
\nonumber
&=&\sum_{\kappa=1}^n \left(\frac{a_{\kappa}}{A_{\kappa}}\right)^2 \left(\prod_{i={\kappa+1}}^n \bigg(1-\left(\frac{a_i}{A_i}\right)^2 \bigg)\right) \times \mathbb E\left[ f \left(\big(\Delta_{n,\kappa}^{(1)}(k), \Delta_{n,\kappa}^{(2)}(k)\big), 1 \leq k \leq n\right) \right] \end{eqnarray}
where for $j=1,2$,
\begin{eqnarray}
\label{aftersplitting}
\Delta_{n,\kappa}^{(j)}(k)=\sum_{i=1}^{(\kappa-1)\wedge k}a_iV_i \mathbbm 1_{\left\{U_i \leq \frac{a_i}{A_i}\right\}}+a_{\kappa}V_{\kappa}^{(j)} \mathbbm 1_{\{k \geq \kappa\}}+\sum_{i=\kappa+1}^{k} a_i V_i^{(j)}B^{(i,j)}.
\end{eqnarray}
\end{lem}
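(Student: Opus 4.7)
The plan is to proceed by induction on $n$, peeling off the contribution of the newly added segment $\mathsf{b}_n$ at each step. The base case $n=1$ is immediate: both marked points are independent uniform on $\mathsf{b}_1$, so the only $\kappa$ is $\kappa = 1$ with weight $(a_1/A_1)^2 = 1$, and $D_1^{(j)} = a_1 V_1^{(j)} = \Delta_{1,1}^{(j)}(1)$.

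For the inductive step, I would condition on the positions of $X_n^{(1)}, X_n^{(2)}$ relative to $\mathsf{b}_n$. The key geometric observation is that the branch point $\mathsf{B}$ of the two marked points lies on $\mathsf{b}_n$---equivalently, $\kappa = n$---if and only if \emph{both} points fall in $\mathsf{b}_n \setminus T_{n-1}$, since $\mathsf{b}_n$ bears no later-attached subtrees and so two distinct root-paths both meeting $\mathsf{b}_n$ must terminate there. Since the marked points are independent uniform on $T_n$, this event has probability $(a_n/A_n)^2$, matching the $\kappa = n$ weight in the formula. On this event, writing $p$ for the gluing point of $\mathsf{b}_n$ (uniform on $T_{n-1}$) and $X_n^{(j)} = p + a_n V_n^{(j)}$, the projection $X_n^{(j)}(k)$ coincides for $k \leq n-1$ with the projection of the single point $p$ onto $T_k$; formula (\ref{D_n(k)}) applied to $p$ then gives $D_n^{(j)}(k) = \sum_{i=1}^k a_i V_i \mathbbm 1_{\{U_i \leq a_i/A_i\}}$ with the \emph{same} $(U_i, V_i)$ shared by $j = 1$ and $j = 2$, while the extra $a_n V_n^{(j)}$ appears at $k = n$. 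This is exactly $\Delta_{n,n}^{(j)}$.

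On the complementary event $\{\kappa < n\}$ (which has probability $1 - (a_n/A_n)^2 = A_{n-1}(A_n + a_n)/A_n^2$), a direct computation shows that the conditional law of $\bigl(\mathbbm 1_{\{X_n^{(1)} \in \mathsf{b}_n\}}, \mathbbm 1_{\{X_n^{(2)} \in \mathsf{b}_n\}}\bigr)$ is precisely (\ref{defpair}): the elementary probabilities $(A_{n-1}/A_n)^2$, $a_n A_{n-1}/A_n^2$, $a_n A_{n-1}/A_n^2$, $0$ renormalise to yield the weights $A_{n-1}/(A_n+a_n)$ and $a_n/(A_n+a_n)$ appearing there. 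In each sub-case the pair of projections onto $T_{n-1}$ consists of two \emph{independent} uniform points of $T_{n-1}$: either the two marked points themselves (when both lie in $T_{n-1}$), or one marked point together with the gluing point $p$ (uniform on $T_{n-1}$, independent of the marked point sampled in $T_{n-1}$). One then checks that $D_n^{(j)}(k) = D_{n-1}^{(j)}(k)$ for $k \leq n-1$ and $D_n^{(j)}(n) = D_n^{(j)}(n-1) + a_n V_n^{(j)} B^{(n,j)}$. Applying the inductive hypothesis to this pair of uniform projections, and using the immediate identities $\Delta_{n,\kappa}^{(j)}(k) = \Delta_{n-1,\kappa}^{(j)}(k)$ for $k \leq n-1$ and $\kappa \leq n-1$ together with $\Delta_{n,\kappa}^{(j)}(n) = \Delta_{n,\kappa}^{(j)}(n-1) + a_n V_n^{(j)} B^{(n,j)}$, the $\kappa < n$ contribution becomes
$$\left(1 - \left(\frac{a_n}{A_n}\right)^2\right) \sum_{\kappa=1}^{n-1} \left(\frac{a_\kappa}{A_\kappa}\right)^2 \prod_{i=\kappa+1}^{n-1}\left(1 - \left(\frac{a_i}{A_i}\right)^2\right) \mathbb{E}\bigl[f(\Delta_{n,\kappa})\bigr],$$
which combines with the $\kappa = n$ term of the previous paragraph to produce the announced formula.

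The main subtlety will be to make the independence claim in the $\kappa < n$ case fully rigorous: conditional on which of the two marked points lies on $\mathsf{b}_n$, the pair of $T_{n-1}$-projections must be two genuinely independent uniform points. This rests on the standard fact that the gluing point of $\mathsf{b}_n$ is uniform on $T_{n-1}$ and independent of any marked point sampled in $T_{n-1}$, which allows one to replace the ``point on $\mathsf{b}_n$'' by its uniform projection without disturbing the joint distribution in the complementary ``point in $T_{n-1}$'' coordinate.
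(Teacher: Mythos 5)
Your proposal is correct and follows essentially the same inductive argument as the paper: condition at each step on which of the two marked points lie in $\mathsf{b}_n$, observe that the case ``both in $\mathsf{b}_n$'' produces the $\kappa=n$ term, and in the complementary case apply the inductive hypothesis to two independent uniform points of $T_{n-1}$ while noting that the conditional law of the indicator pair is exactly (\ref{defpair}). The paper phrases this as a four-case direct computation of expectations rather than explicitly renormalising to a conditional law, but the content and the algebra (in particular the factorisation $A_{n-1}a_n/A_n^2=(1-(a_n/A_n)^2)\,\mathbb P(B^{(n,1)}=0,B^{(n,2)}=1)$, etc.) are the same.
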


This lemma implies in particular that the distribution of the splitting index $S_n(2)$ of the two paths linking respectively $X_n^{(1)}$ and $X_n^{(2)}$ to the root, i.e.
$$
S_n(2):=\inf \left\{1\leq k \leq n: p_k(X_n^{(1)})\neq p_k(X_n^{(2)}) \right\},
$$ 
where $p_k(X_n^{(i)}), i=1,2$ denotes the projection of $X_n^{(i)}$ onto $T_k$, 
is given by
\begin{equation}
\label{cvSn2}
\mathbb P\left(S_n(2)=\kappa \right)=\left(\frac{a_{\kappa}}{A_{\kappa}}\right)^2\prod_{i={\kappa+1}}^n \left(1-\left(\frac{a_i}{A_i}\right)^2 \right), \quad 1\leq \kappa \leq n
\end{equation}
(which is indeed a probability distribution!). Moreover, given $S_n(2)=\kappa$, the dependence of the two paths above the index $\kappa+1$ is only driven by pairs of random variables $\left(B^{(i,1)},B^{(i,2)}\right),i \geq \kappa+1$, as described in (\ref{aftersplitting}).

\begin{proof}
We proceed by induction on $n \geq 1$. For $n=1$, the formula of the lemma reduces to $$\mathbb E\left[ f\big(D_1^{(1)},D_1^{(2)}\big) \right]=\mathbb E \left[ f\big(a_1 V_1^{(1)},a_1 V_1^{(2)}\big)\right]$$ which is obviously true since the two marked points are independently and uniformly distributed on a segment of length $a_1$. Consider now an integer $n\geq 2$ and assume that the formula of the lemma holds for $n-1$.  When marking $X_n^{(1)},X_n^{(2)}$, four disjoint situations may arise:
\begin{enumerate}
\item[$\bullet$] with probability $(a_n/A_n)^2$, the two marked points are on the branch $\mathsf b_n$. Conditionally on this event,  $D^{(1)}_n(k)=D^{(2)}_n(k), 1 \leq k \leq n-1$ which corresponds to the path to the root of a point uniformly distributed on $T_{n-1}$, which is distributed as 
$$
\sum_{i=1}^{k}a_iV_i \mathbbm 1_{\left\{U_i \leq \frac{a_i}{A_i}\right\}}, \quad 1 \leq k \leq n-1.
$$
Moreover $D^{(1)}_n(n)-D^{(1)}_n(n-1)$ and $D^{(2)}_n(n)-D^{(1)}_n(n-1)$ are independent, independent of the path $(D^{(1)}_n(k), k \leq n-1)$, and uniformly distributed on  $\mathsf b_n$, which has length $a_n$.
All this leads to the term $\kappa=n$ in the sum (\ref{sumkappa}).
\item[$\bullet$] with probability $A_{n-1}a_n/A^2_n$, $X_n^{(1)} \in T_{n-1}$ and $X_n^{(2)} \in \mathsf b_n$. Conditionally on this event, $D_n^{(1)}(k),1 \leq k\leq n-1$ and $D_n^{(2)}(k),1 \leq k\leq n-1$ correspond to the respective paths to the root of two points marked independently, uniformly in $T_{n-1}$. Their joint distribution is therefore given by the induction hypothesis. Moreover $D_n^{(1)}(n)=D_n^{(1)}(n-1)$ and $D_n^{(2)}(n)-D_n^{2}(n-1)$ is independent of the paths $(D_n^{(1)}(k),D_n^{(2)}(k)),1 \leq k\leq n-1$ and is uniformly distributed on $\mathsf b_n$. To sum up, setting $\Delta_{n-1,\kappa}^{(1)}(n):=\Delta_{n-1,\kappa}^{(1)}(n-1)$ and setting for $\kappa \leq n-1$ $\Delta_{n-1,\kappa}^{(2)}(n):=\Delta_{n-1,\kappa}^{(2)}(n-1)+a_nV_n^{(2)}$, we have:
\begin{eqnarray*}
&&\mathbb E\left[ f\left(\big(D_n^{(1)}(k), D_n^{(2)}(k)\big),1 \leq k \leq n \right) \mathbbm 1_{\left\{X_n^{(1)} \in T_{n-1},X_n^{(2)} \in \mathsf b_n\right\}}\right] \\
&=& \frac{A_{n-1}a_n}{A^2_n} \times  \sum_{\kappa=1}^{n-1} \left(\frac{a_{\kappa}}{A_{\kappa}}\right)^2 \left(\prod_{i={\kappa+1}}^{n-1} \bigg(1-\left(\frac{a_i}{A_i}\right)^2 \bigg)\right)  \\
&&  \hspace{2.5cm}  \times \mathbb E\left[ f\left(\big(\Delta_{n-1,\kappa}^{(1)}(k), \Delta_{n-1,\kappa}^{(2)}(k)\big), 1 \leq k \leq n \right) \right] \\
&=& \sum_{\kappa=1}^{n-1} \left(\frac{a_{\kappa}}{A_{\kappa}}\right)^2 \left(\prod_{i={\kappa+1}}^n \bigg(1-\left(\frac{a_i}{A_i}\right)^2 \bigg)\right) \\
&& \hspace{0.7cm} \times \mathbb E\left[ f \left(\big(\Delta_{n,\kappa}^{(1)}(k), \Delta_{n,\kappa}^{(2)}(k)\big),1 \leq k \leq n \right) \mathbbm 1_{\left\{B^{(n,1)}=0,B^{(n,2)}=1 \right\}} \right],
\end{eqnarray*}
where we have used for the second equality that 
$$
 \frac{A_{n-1}a_n}{A^2_n} =\left(1-\left(\frac{a_n}{A_n} \right)^2 \right) \times \mathbb P\big(B^{(n,1)}=0,B^{(n,2)}=1\big).
$$
\item[$\bullet$] with probability $A_{n-1}a_n/A^2_n$, $X_n^{(2)} \in T_{n-1}$ and $X_n^{(1)} \in \mathsf b_n$, which is  symmetric to the previous case. 
\item[$\bullet$] with probability $(A_{n-1}/A_n)^2$ the two marked points are in $T_{n-1}$. Conditionally on this event,  $D^{(1)}_n(n)=D^{(1)}_n(n-1)$,  $D^{(2)}_n(n)=D^{(2)}_n(n-1)$  and $D_n^{(1)}(k),1 \leq k\leq n-1$ and $D_n^{(2)}(k),1 \leq k\leq n-1$ correspond to the paths to the root  of  two points marked independently, uniformly in $T_{n-1}$. Their joint distribution is therefore given by the induction hypothesis, and setting for $\kappa \leq n-1$ $\Delta^{(1)}_{n-1,\kappa}(n):=\Delta^{(1)}_{n-1,\kappa}(n-1)$ and $\Delta^{(2)}_{n-1,\kappa}(n):=\Delta^{(2)}_{n-1,\kappa}(n-1)$, we have:
\begin{eqnarray*}
&&\mathbb E\left[ f\left(\big(D_n^{(1)}(k), D_n^{(2)}(k)\big),1 \leq k \leq n \right) \mathbbm 1_{\left\{X_n^{(1)} \in T_{n-1},X_n^{(2)} \in T_{n-1} \right\}}\right] \\
&=& \frac{A_{n-1}^2}{A^2_n} \times  \sum_{\kappa=1}^{n-1} \left(\frac{a_{\kappa}}{A_{\kappa}}\right)^2 \left(\prod_{i={\kappa+1}}^{n-1} \bigg(1-\left(\frac{a_i}{A_i}\right)^2 \bigg)\right)  \\
&& \hspace{2.1cm} \times \mathbb E\left[ f\left(\big(\Delta_{n-1,\kappa}^{(1)}(k), \Delta_{n-1,\kappa}^{(2)}(\kappa)\big), 1 \leq k \leq n\right) \right] \\
&=& \sum_{\kappa=1}^{n-1} \left(\frac{a_{\kappa}}{A_{\kappa}}\right)^2 \left(\prod_{i={\kappa+1}}^n \bigg(1-\left(\frac{a_i}{A_i}\right)^2 \bigg)\right) \\
&& \hspace{0.7cm} \times \mathbb E\left[ f \left(\big(\Delta_{n,\kappa}^{(1)}(k), \Delta_{n,\kappa}^{(2)}(k)\big) ,1 \leq k \leq n\right)\mathbbm 1_{\left\{B^{(n,1)}=0,B^{(n,2)}=0 \right\}} \right],
\end{eqnarray*}
where we have used for the second equality that 
$$
 \frac{A_{n-1}^2}{A^2_n} =\left(1-\left(\frac{a_n}{A_n} \right)^2 \right) \times \mathbb P\big(B^{(n,1)}=0,B^{(n,2)}=0\big).
$$
\end{enumerate}
Gathering these four situations finally leads to the formula of the lemma for $n$.
\end{proof}

\subsection{Marking $\ell$ points and behavior of $T_n(\ell)$}
\label{sec:Markingk}

The goal of this section is to prove Proposition \ref{prop:Prokho}. 
We start with a few notation. For each $n$, given $T_n$, let $X_n^{(1)},\ldots,X_n^{(\ell)}$ be $\ell$ points picked independently and uniformly in $T_n$. Let then $D_n^{(1)},\ldots,D_n^{(\ell)}$ be their respective distances to the root, and for all $1 \leq k \leq n$,  $D^{(1)}_n(k),\ldots, D^{(\ell)}_n(k)$ be the respective distances to the root of the projections of $X_n^{(1)},\ldots,X_n^{(\ell)}$ onto $T_{k} \subset T_n$. 

In the tree $T_n(\ell)$, the subtree of $T_n$ spanned from the root and $X_n^{(1)},\ldots,X_n^{(\ell)}$, we let, using the notation of the introduction, 
$$\mathsf B_n(\ell):=\mathsf B_n^{(i_0,j_0)} \quad \text{if} \quad d(\mathsf B_n^{(i_0,j_0)},\mathsf{root})=\max_{1 \leq i \neq j\leq \ell}(d(\mathsf B_n^{(i,j)},\mathsf{root}))$$
be the point amongst the $\mathsf B_n^{(i,j)}, 1 \leq i\neq j\leq \ell$ the farthest from the root (note that it is well--defined a.s.). We may and will also see $\mathsf B_n(\ell)$ as a point of $T_n$.

We will need the following random variables. For all $i \geq 1$, let $\left(B^{(i,1)},\ldots,B^{(i,\ell)}\right)$ be an exchangeable $\ell$--uplet with distribution 
\begin{equation}
\label{defnuplet}
\left\{\begin{array}{lll} \vspace{0.15cm}\mathbb P\left((B^{(i,1)},\ldots,B^{(i,\ell)})=(u_1,\ldots,u_{\ell})\right) & =  & 0 \quad \text{ for all } (u_i)_{1 \leq i \leq \ell}  \in \{0,1\}^\ell \text{ with at least two 1} \\  \vspace{0.15cm} \mathbb P\left((B^{(i,1)},\ldots,B^{(i,\ell)})=(1,0,\ldots,0)\right) & = & \frac{a_i}{A_{i-1}+ \ell a_i} \\ \mathbb P\left((B^{(i,1)},\ldots,B^{(i,\ell)})=(0,0\ldots,0)\right) & = & \frac{A_{i-1}}{A_{i-1}+\ell a_i}. 
\end{array}\right.
\end{equation} 

In order to study the asymptotic behavior of $(T_n(\ell))$, we set up the following lemma, which is similar to Lemma \ref{Twomarked}, although less explicit.

\begin{lem}
\label{lemcond}
For all $k \in \mathbb N$ and all $n \in \mathbb N$, $n>k$, the distribution of 
$$
\big(D_n^{(1)}-D_{k+1}^{(1)},\ldots,D_n^{(\ell)}-D_{k+1}^{(\ell)} \big) \text{ given that } \mathsf B_n(\ell) \in T_{k}
$$
is the same as that of 
$$
\left(\sum_{i=k+2}^n a_i V_i^{(1)}B^{(i,1)}, \ldots, \sum_{i=k+2}^n a_i V_i^{(\ell)}B^{(i,\ell)} \right),
$$
where the random variables $V_i^{(j)},i\geq 1,1 \leq j \leq \ell$ are i.i.d. uniform on $(0,1)$, the $\ell$--uplets $\left(B^{(i,1)},\ldots,B^{(i,\ell)}\right)$ are distributed via \emph{(}\ref{defnuplet}\emph{)}, $\forall i \geq 1$, independently of each other and independently of $(V_i^{(j)},i\geq 1,1 \leq j \leq \ell)$.
\end{lem}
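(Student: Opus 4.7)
I would prove this by induction on $n \geq k+1$, parallel to the strategy of Lemma~\ref{Twomarked} but using the event $\{\mathsf B_n(\ell) \in T_k\}$ in place of a specific splitting index. The base case $n = k+1$ is immediate: each left-hand increment $D_n^{(j)} - D_n^{(j)}(n)$ is identically zero, while the right-hand sum $\sum_{i=k+2}^n$ is empty, so both sides agree trivially (on the conditioning event, which has positive probability).

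For the inductive step, I condition on how the $\ell$ marked points distribute between $\mathsf b_n$ and $T_{n-1}$. Given $T_n$, each $X_n^{(j)}$ independently lies in $\mathsf b_n$ with probability $a_n/A_n$ and in $T_{n-1}$ with probability $A_{n-1}/A_n$. The key observation is that as soon as two or more marked points fall inside $\mathsf b_n$, their pairwise branch point sits strictly inside $\mathsf b_n$, hence above $T_k$; this scenario is incompatible with $\{\mathsf B_n(\ell) \in T_k\}$ and contributes nothing. Only two kinds of scenarios survive. First, no marked point lies in $\mathsf b_n$ (probability $(A_{n-1}/A_n)^\ell$), in which case $D_n^{(j)}=D_{n-1}^{(j)}$, $D_n^{(j)}(k+1)=D_{n-1}^{(j)}(k+1)$, and $\mathsf B_n(\ell) = \mathsf B_{n-1}(\ell)$, so the induction hypothesis directly yields the conditional joint law of $(D_n^{(j)}-D_n^{(j)}(k+1))_j$. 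Second, exactly one marked point $X_n^{(j_0)}$ lies in $\mathsf b_n$ (probability $(a_n/A_n)(A_{n-1}/A_n)^{\ell-1}$ for each $j_0$); here I would identify the attachment point of $\mathsf b_n$ with the projection of $X_n^{(j_0)}$ onto $T_{n-1}$. Since the attachment is uniform on $T_{n-1}$ independently of the subsequent marking, and $X_n^{(j_0)}$ conditioned to lie in $\mathsf b_n$ is uniform on $\mathsf b_n$ independently of the attachment, the attachment together with the remaining $\ell-1$ marked points constitute $\ell$ i.i.d. uniform points in $T_{n-1}$, and $\mathsf B_n(\ell)$ coincides with their deepest pairwise branch point, to which the induction hypothesis applies. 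In this second scenario, $D_n^{(j)}-D_n^{(j)}(k+1)$ equals the analogous quantity for the reduced problem at level $n-1$, with an independent extra term $a_n V_n^{(j_0)}$ added only to component $j_0$.

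To conclude, I match these two scenarios with the right-hand side by isolating the contribution of the last index $i=n$ in $\sum_{i=k+2}^n a_i V_i^{(j)} B^{(i,j)}$. Under the distribution (\ref{defnuplet}) the vector $(B^{(n,1)},\ldots,B^{(n,\ell)})$ equals the zero vector with probability $A_{n-1}/(A_{n-1}+\ell a_n)$ and the $j_0$-th basis vector with probability $a_n/(A_{n-1}+\ell a_n)$, and these are precisely the conditional probabilities given $\{\mathsf B_n(\ell) \in T_k\}$ of the two surviving scenarios above (the normalization factor $(A_{n-1}+\ell a_n)/A_n \cdot (A_{n-1}/A_n)^{\ell-1}$ exactly accounts for the ratio $\mathbb P(\mathsf B_n(\ell)\in T_k)/\mathbb P(\mathsf B_{n-1}(\ell)\in T_k)$). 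In the one-marked-point scenario, the extra contribution $a_n V_n^{(j_0)}$ is exactly what the term $a_n V_n^{(j_0)} B^{(n,j_0)}$ contributes when $B^{(n,j_0)}=1$. The main delicate point is the identification, in the one-marked-point scenario, of the attachment of $\mathsf b_n$ with a ``virtual'' extra marked point on $T_{n-1}$; once this symmetry is justified (as above) the induction collapses cleanly, and the exchangeability built into (\ref{defnuplet}) reflects the symmetric role played by the $\ell$ marked points throughout.
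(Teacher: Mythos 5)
Your proof is correct and takes essentially the same inductive approach as the paper's (induction on $n$, splitting according to whether zero or exactly one marked point lands on the new segment, and noting that two or more marked points in $\mathsf b_n$ would force the deepest branch point outside $T_k$). You spell out the normalization and the identification of the attachment point with the projection of $X_n^{(j_0)}$ in somewhat more detail than the paper's brief sketch, but the argument is the same.
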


\begin{proof} The proof is similar to that of Lemma \ref{Twomarked} and holds by induction on $n>k$. We sketch it briefly. For $n=k+1$ the statement is obvious since both $\ell-$uplets are then equal to $(0,\ldots,0)$. Assume now that the statement holds for some $n>k$. Then observe what happens for $n+1$: given that $\mathsf B_{n+1}(\ell) \in T_{k}$, two situations may occur:
\begin{enumerate}
\item[$\bullet$] either none of the marked points belongs to the segment $\mathsf b_{n+1}$. This occurs with a probability proportional to $(A_n)^{\ell}$ and then   
$$
\big(D_{n+1}^{(1)}-D_{k+1}^{(1)},\ldots,D_{n+1}^{(\ell)}-D_{k+1}^{(\ell)} \big) \text{ given that } \mathsf B_{n+1}(\ell) \in T_{k}
$$
is distributed as 
$$
\big(D_n^{(1)}-D_{k+1}^{(1)},\ldots,D_n^{(\ell)}-D_{k+1}^{(\ell)} \big) \text{ given that } \mathsf B_n(\ell) \in T_{k}.
$$
\item[$\bullet$]  or a unique marked point belongs to the segment $\mathsf b_{n+1}$. The probability that $X_{n+1}^{(1)}$ belongs to $\mathsf b_{n+1}$ (and not the other $\ell-1$ marked points) is proportional to $a_{n+1}(A_n)^{\ell-1}$ and in that case,
$$
\big(D_{n+1}^{(1)}-D_{k+1}^{(1)},\ldots,D_{n+1}^{(\ell)}-D_{k+1}^{(\ell)} \big) \text{ given that } \mathsf B_{n+1}(\ell) \in T_{k}
$$
is distributed as 
$$
\big(D_n^{(1)}+a_{n+1}V-D_{k+1}^{(1)},\ldots,D_n^{(\ell)}-D_{k+1}^{(\ell)} \big) \text{ given that } \mathsf B_n(\ell) \in T_{k},
$$
where $V$ is uniform on $(0,1)$ and independent of $D_n^{(i)}-D_{k+1}^{(i)}, 1 \leq i \leq \ell,  \mathsf B_n(\ell)$.
\end{enumerate}
This leads to the statement for $n+1$.
\end{proof}

\bigskip

\textit{Proof of Proposition \ref{prop:Prokho}.} Throughout this proof it is assumed that $(a_n)$ is regularly varying with index $\alpha>0$ (the proof is identical under the assumptions (ii) of Proposition \ref{prop:Prokho}).  With the notation of this section, our goal is to prove that 
$$
\left(\frac{D_n^{(i)}}{a_n}, 1 \leq i \leq \ell, \frac{d(\mathsf B_n(\ell),\mathsf{root})}{a_n}\right)\underset{n \rightarrow \infty}{\overset{\mathrm{(d)}}\longrightarrow} \left(\left(\xi^{(i)}_{(\alpha)},1 \leq i \leq \ell\right),0 \right) 
$$ 
where $\xi^{(1)}_{(\alpha)},\ldots,\xi^{(\ell)}_{(\alpha)}$ are i.i.d. with distribution (\ref{TransfoLapl}).
We first claim that
\begin{equation*}
\label{cvBn}
\frac{d(\mathsf B_n(\ell),\mathsf{root})}{a_n} \underset{n \rightarrow \infty}{\overset {\mathbb P} \longrightarrow} 0,
\end{equation*}
since $a_n \rightarrow \infty$ and $d(\mathsf B_n(\ell),\mathsf{root}) \leq \sum_{1 \leq i \neq j \leq \ell} d(\mathsf B_n^{(i,j)},\mathsf{root})$, which is stochastically bounded since the splitting index $S_n(2)$ of the paths of two marked points converges in distribution, by (\ref{cvSn2}). By Slutsky's Theorem, it remains to prove that   
$$
\left(\frac{D_n^{(i)}}{a_n}, 1 \leq i \leq \ell \right)\underset{n \rightarrow \infty}{\overset{\mathrm{(d)}}\longrightarrow} \left(\xi^{(i)}_{(\alpha)},1 \leq i \leq \ell\right). 
$$ 
We start by observing that for all $k\geq 1$, 
$$
\left(\frac{D_n^{(i)}-D_{k+1}^{(i)}}{a_n}, 1 \leq i \leq \ell \right) \quad \text{given that } \mathsf B_n(\ell) \in T_{k}\quad \underset{n \rightarrow \infty}{\overset{\mathrm{(d)}}\longrightarrow} \left(\xi^{(i)}_{(\alpha)},1 \leq i \leq \ell\right),  
$$
which obviously leads to (since $a_n \rightarrow \infty$)
$$
\left(\frac{D_n^{(i)}}{a_n}, 1 \leq i \leq \ell \right) \quad \text{given that } \mathsf B_n(\ell) \in T_{k}\quad \underset{n \rightarrow \infty}{\overset{\mathrm{(d)}}\longrightarrow} \left(\xi^{(i)}_{(\alpha)},1 \leq i \leq \ell\right).  
$$
The above observation relies on the following consequence of Lemma \ref{lemcond}: for all $(\lambda_i)_{1 \leq i \leq \ell} \in \mathbb R^{\ell}$ and all $n>k$,
$$
\ln \left( \mathbb E \left[ \exp\left(\sum_{i=1}^{\ell} \lambda_i \frac{D_n^{(i)}-D_{k+1}^{(i)}}{a_n}\right)\right] |  \mathsf B_n(\ell) \in T_{k}\right)=\sum_{j=k+2}^n  \ln \left(1+  \frac{a_j}{A_{j-1}+\ell a_j}\sum_{i=1}^{\ell} \frac{\exp(\lambda_i \frac{a_j}{a_n})-1}{\lambda_i \frac{a_j}{a_n}}-1 \right)
$$
(with the usual convention $x^{-1}(\exp(x)-1)=1$ when $x=0$).
A slight modification of the proof of Lemma \ref{lemLaplace} implies that this logarithm converges to  $\sum_{i=1}^{\ell} \phi_{(\alpha)}(\lambda_i)$, which then leads to the expected convergences in distribution. The end of the proof is then easy. Let $\mathbf V_n=(a_n^{-1} D_n^{(i)}, 1 \leq i \leq \ell)$ and $f:\mathbb R^{\ell} \rightarrow \mathbb R$ be a continuous, bounded function. Fix $\varepsilon >0$. There exists $k_{\varepsilon} \in \mathbb N$ such that $\mathbb P(\mathsf B_n(\ell) \notin T_{k_{\varepsilon}}) \leq \varepsilon$ for all $n$, since, as already mentioned, the splitting index of the paths of two marked points converges in distribution, by (\ref{cvSn2}). Then, writing
$$
\mathbb E\left[f(\mathbf V_n) \right] =\mathbb E\left[f(\mathbf V_n) | B_n(\ell) \in T_{k_{\varepsilon}} \right] \mathbb P(\mathsf B_n(\ell) \in T_{k_{\varepsilon}}) +\mathbb E\left[f(\mathbf V_n) \mathbbm 1_{\{\mathsf B_n(\ell) \notin T_{k_{\varepsilon}}\}}\right] 
$$
we get that
\begin{eqnarray*}
\mathbb E\left[ f\big(\xi^{(i)}_{(\alpha)},1 \leq i \leq \ell\big)\right](1-\varepsilon) - \sup_{x \in \mathbb R^{\ell}} |f(x)| \varepsilon &\leq& 
\liminf_{n \rightarrow \infty} \mathbb E\left[f(\mathbf V_n) \right] \\ &\leq& \limsup_{n \rightarrow \infty} \mathbb E\left[f(\mathbf V_n) \right] \leq \mathbb E\left[ f\big(\xi^{(i)}_{(\alpha)},1 \leq i \leq \ell\big)\right] + \sup_{x \in \mathbb R^{\ell}} |f(x)| \varepsilon.
\end{eqnarray*}
Letting $\varepsilon \rightarrow 0$ gives the result. 
$\hfill \square$

\section{Height of $T_n$ when $\alpha>0$} 
\label{SecHeight}

Throughout this section we assume that $(a_n)$ is regularly varying with index $\alpha>0$. Our goal is to prove that
$$
\frac{H_n \cdot \ln(\ln(n))}{a_n \ln(n)} \overset{\mathrm{a.s.}}{\underset{n \rightarrow \infty}\longrightarrow} 1
$$
(Theorem \ref{thm:heightmax} (ii)).
We split the proof into two parts, starting with the fact that
\begin{eqnarray}
\label{upperbound}
\limsup_{n \rightarrow \infty} \frac{H_n \cdot \ln(\ln(n))}{a_n \ln(n)} \leq  1 \quad \text{a.s.},
\end{eqnarray}
which is an easy consequence of Borel--Cantelli's lemma and Lemma \ref{lem:equiv} (i). We will  then show that 
\begin{eqnarray}
\label{lowerbound}
\liminf_{n \rightarrow \infty} \frac{H_n \cdot \ln(\ln(n))}{a_n \ln(n)} \geq  1 \quad \text{a.s.},
\end{eqnarray}
using the second moment method and, again, Borel--Cantelli's lemma. To carry this out, we will use Lemma \ref{Twomarked} on the two marked points, as well as the estimates of Lemma \ref{lem:equiv} (ii).

\subsection{Proof of the limsup (\ref{upperbound})} 

In the infinite tree $\cup_{n \geq 1} T_n$, label the leaves by order of apparition: for each $i\geq 1$, the leaf $L_i$ is the one that belongs to the branch $\mathsf b_i$. Then consider for $i \geq 2$ the projection of $L_i$ onto $T_{i-1}$ and denote by $\overline D_{i-1}$ the distance of this projection to the root, which is distributed as $D_{i-1}$. Let $\overline D_0=0$ and note that
$$
H_n=\max_{1\leq i \leq n}\{d(L_i,\mathsf{root})\}=\max_{1 \leq i \leq n}\{\overline D_{i-1}+a_i\}.
$$
Now, let $c_1>c_2>1$. By Lemma \ref{lem:equiv} (i), 
$$
\sum_{i\geq 1} \mathbb P \left(\overline D_{i-1} \geq c_2 \frac{a_{i-1} \ln(i-1)}{\ln(\ln(i-1))} \right) <\infty.
$$
Hence by Borel--Cantelli's lemma, almost surely
$$
\overline D_{i-1} < c_2 \frac{a_{i-1} \ln(i-1)}{\ln(\ln(i-1))} 
$$
for all $i$ large enough. This leads, together with the fact that $(a_i)$ is regularly varying -- see in particular (\ref{regularvar1}) -- to the almost sure existence of a (random) $i_0$ such that
$$
\overline D_{i-1} +a_i< c_1 \frac{a_{n} \ln(n)}{\ln(\ln(n))} 
$$
for all $n\geq i \geq i_0$. Hence,
$$
\limsup_{n\rightarrow \infty} \frac{H_n \cdot \ln(\ln(n))}{a_n \ln(n)} \leq c_1 \quad \text{a.s.}
$$
This holds for all $c_1>1$, hence (\ref{upperbound}). 

\bigskip

\subsection{Proof of the liminf (\ref{lowerbound})} 

Let $X_n^{(i)},1 \leq i \leq n$ be $n$ points marked independently and uniformly  in $T_n$. Then let $D_n^{(i)},1 \leq i \leq n$ denote their respective distances to the root, and for all $k<n$, $D_n^{(i)}(k),1 \leq i \leq n$ denote the distances to the root of their respective projections onto $T_k$. Of course, $H_n \geq \max_{1 \leq i \leq n}D_n^{(i)}$ and it is sufficient to prove the liminf for this maximum of \emph{dependent} random variables. In that aim, we first settle the following lemma, using the second moment method.

\medskip

\begin{lem}
\label{lemmamin}
For all $ c \in (0,1)$ and all $\gamma<1$,
\begin{eqnarray}
\label{min} 
\mathbb P \left( \frac{\max_{1\leq i\leq n} \big(D_n^{(i)}-D_n^{(i)}(\lfloor nc \rfloor)\big)}{a_n} \leq \gamma \frac{\ln(n)}{\ln(\ln(n))}\right)\leq n^{\gamma-1+\circ(1)}.
\end{eqnarray}
\end{lem}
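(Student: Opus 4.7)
Set $t_n := \gamma a_n \ln(n)/\ln(\ln(n))$, $Z_n^{(i)} := D_n^{(i)} - D_n^{(i)}(\lfloor nc\rfloor)$, and $N_n := \sum_{i=1}^n \mathbbm 1_{\{Z_n^{(i)} > t_n\}}$, so that the event to bound is $\{N_n = 0\}$. The plan is a second moment argument: Chebyshev gives $\mathbb P(N_n = 0) \leq \mathrm{Var}(N_n)/\mathbb E[N_n]^2$, and the two halves of Lemma \ref{lem:equiv} (ii) together show that the common marginal $p := \mathbb P(Z_n > t_n)$ satisfies $p = n^{-\gamma+\circ(1)}$, whence $\mathbb E[N_n] = np = n^{1-\gamma+\circ(1)}$. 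It therefore suffices to establish the matching bound $\mathrm{Var}(N_n) \leq n^{1-\gamma+\circ(1)}$.

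By exchangeability of the marked points, $\mathrm{Var}(N_n) \leq \mathbb E[N_n] + n^2 \bigl[\mathbb P(Z_n^{(1)} > t_n, Z_n^{(2)} > t_n) - p^2\bigr]$, so the core estimate to prove is $\mathbb P(Z_n^{(1)} > t_n, Z_n^{(2)} > t_n) \leq p^2 + n^{-1-\gamma+\circ(1)}$. To obtain this I would condition on the splitting index $\kappa := S_n(2)$, whose law is (\ref{cvSn2}), and use the representation (\ref{aftersplitting}) of Lemma \ref{Twomarked}, distinguishing the two regimes $\kappa \leq \lfloor nc\rfloor$ and $\kappa > \lfloor nc\rfloor$.

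In the regime $\kappa \leq \lfloor nc\rfloor$, (\ref{aftersplitting}) gives $Z_n^{(j)} = \sum_{i=\lfloor nc\rfloor+1}^n a_i V_i^{(j)} B^{(i,j)}$ for $j=1,2$, where the pairs $(B^{(i,1)}, B^{(i,2)})$ are independent across $i$ and negatively associated for each $i$ (they cannot both be $1$, by (\ref{defpair})), with marginal parameter $a_i/(A_i+a_i) \leq a_i/A_i$. By the standard closure properties of negative association, the random variables $Z_n^{(1)}$ and $Z_n^{(2)}$ are themselves negatively associated, so $\mathbb P(\text{both} \mid \kappa) \leq \mathbb P(Z_n^{(1)} > t_n \mid \kappa)^2$. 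The pointwise coupling $\mathbbm 1_{\{U_i \leq a_i/(A_i+a_i)\}} \leq \mathbbm 1_{\{U_i \leq a_i/A_i\}}$ moreover shows that $Z_n^{(1)} \mid \kappa$ is stochastically dominated by the marginal $Z_n$, so $\mathbb P(Z_n^{(1)} > t_n \mid \kappa) \leq p$, and hence $\mathbb P(\text{both}, \kappa \leq \lfloor nc\rfloor) \leq p^2$.

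In the regime $\kappa > \lfloor nc\rfloor$, (\ref{cvSn2}) together with $a_i/A_i \sim (\alpha+1)/i$ from (\ref{regularvar2}) yields $\mathbb P(\kappa > \lfloor nc\rfloor) = O(1/n)$. Fixing $\kappa_0 > \lfloor nc\rfloor$, the decomposition (\ref{aftersplitting}) together with the couplings $V_{\kappa_0}^{(1)} \leq 1$ and $B^{(i,1)} \leq \mathbbm 1_{\{U_i \leq a_i/A_i\}}$ shows that $Z_n^{(1)} \mid \kappa_0$ is stochastically dominated by $a_{\kappa_0} + Z_n$. Since $a_{\kappa_0} \leq (1+\varepsilon) a_n$ by (\ref{regularvar1}) while $t_n/a_n \to \infty$, one has $t_n - a_{\kappa_0} \geq \gamma' a_n \ln(n)/\ln(\ln(n))$ for any $\gamma' < \gamma$ and $n$ large, and Lemma \ref{lem:equiv} (ii) gives $\mathbb P(Z_n > t_n - a_{\kappa_0}) \leq n^{-\gamma+\circ(1)}$ uniformly in $\kappa_0$. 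Consequently $\mathbb P(\text{both}, \kappa > \lfloor nc\rfloor) \leq \mathbb P(Z_n^{(1)} > t_n, \kappa > \lfloor nc\rfloor) \leq O(1/n) \cdot n^{-\gamma+\circ(1)} = n^{-1-\gamma+\circ(1)}$, which completes the variance bound and therefore the lemma. The main obstacle throughout is the non-trivial dependence between the $n$ paths; it is resolved by the splitting-index decomposition of Lemma \ref{Twomarked}, with negative association handling the typical case and the fast $O(1/n)$ decay of $\mathbb P(\kappa > \lfloor nc\rfloor)$ handling the atypical one.
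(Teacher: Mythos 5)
Your proof is correct and follows essentially the same strategy as the paper's: a second moment method bounding $\mathbb P(N_n=0)$ by $\mathrm{Var}(N_n)/\mathbb E[N_n]^2$, Lemma \ref{lem:equiv} (ii) for the marginal rate $p=n^{-\gamma+\circ(1)}$, and a decomposition of the pair probability via the splitting index using Lemma \ref{Twomarked}, handling $\kappa\leq\lfloor nc\rfloor$ and $\kappa>\lfloor nc\rfloor$ separately. The only notable difference is in the typical regime: where the paper argues by conditional stochastic domination (given $B^{(i,1)},V_i^{(1)}$, the conditional law of $B^{(i,2)}$ is again stochastically below Bernoulli$(a_i/A_i)$), you instead package the same observation---that $(B^{(i,1)},B^{(i,2)})$ cannot both be $1$---as negative association and invoke the standard closure properties to obtain $\mathbb P(\text{both}\mid\kappa)\leq\mathbb P(Z_n^{(1)}>t_n\mid\kappa)^2\leq p^2$; this is an equally valid and somewhat tidier way of extracting the same $p^2$ bound from the same structure.
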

Since $H_n$ is larger than the maximum involved in this probability, this immediately implies that 
$$\mathbb P \left( \frac{H_n \cdot \ln(\ln(n))}{a_n \ln(n)} \leq \gamma \right) \underset{n \rightarrow \infty}{\longrightarrow} 0 \quad \text{for all } \gamma<1.$$
This is however not sufficient since we want an \emph{almost sure} bound for the liminf (\ref{lowerbound}). We will turn to this conclusion later on. We first prove the lemma. 

\noindent \textbf{Proof of Lemma \ref{lemmamin}.} We start with standard arguments, in order to use the second moment method. Fix $\gamma \in (0,1)$ and introduce
$$
A_n^{(i)}:=\left\{\frac{D_n^{(i)}-D_n^{(i)}(\lfloor nc \rfloor) }{a_n}> \gamma \frac{\ln(n)}{\ln(\ln(n))}\right\}, \quad 1 \leq i \leq n,
$$
and
$$
S_n:=\sum_{i=1}^n \mathbbm 1_{A_n^{(i)}}. 
$$
Since the sequence $(D_n^{(i)}-D_n^{(i)}(\lfloor nc \rfloor),1 \leq i \leq n)$ is exchangeable, we have: 
$$
\mathbb E \left[S_n\right]=n \mathbb P\big(A_n^{(1)}\big)$$ 
and
$$
\mathrm{Var}\left(S_n\right)=n  \mathbb P\big(A_n^{(1)}\big)+n(n-1) \mathbb P\big(A_n^{(1)}\cap A_n^{(2)}\big)-\big(n \mathbb P\big(A_n^{(1)}\big)\big)^2.
$$
Note that with this notation, (\ref{min}) rewrites
$
\mathbb P\left(S_n=0\right)\leq n^{\gamma-1+\circ(1)}.
$ 
To prove this upper bound, we use the second moment method:
\begin{eqnarray*}
\mathbb P\left(S_n=0\right)
&\leq&\frac{\mathrm{Var}\left(S_n\right)}{\left(\mathbb E[S_n]\right)^2} \\
&\leq & \frac{1}{n \mathbb P\big(A_n^{(1)}\big)}+\frac{ \mathbb P\big(A_n^{(1)}\cap A_n^{(2)}\big)}{\big(\mathbb P\big(A_n^{(1)}\big)\big)^2}-1.
\end{eqnarray*}
By Lemma \ref{lem:equiv} (ii), we know that $n\mathbb P\big(A_n^{(1)}\big)= n^{1-\gamma+\circ(1)}$. It remains to show that
$$
\frac{\mathbb P\big(A_n^{(1)}\cap A_n^{(2)}\big)}{\big(\mathbb P\big(A_n^{(1)}\big)\big)^2} \leq 1+n^{\gamma-1+\circ(1)}.
$$ 
In that aim, recall the notation and statement of Lemma \ref{Twomarked}:
\begin{eqnarray*}
 \mathbb P\big(A_n^{(1)} \cap A_n^{(2)}\big) 
 =  \sum_{\kappa=1}^{n} p_{\kappa} \mathbb P\left(\frac{\Delta_{n,\kappa}^{(j)}(n)- \Delta_{n,\kappa}^{(j)}(\lfloor nc \rfloor)}{a_n} >\gamma \frac{ \ln(n)}{\ln(\ln(n))}, j=1,2 \right) 
\end{eqnarray*}
where $p_{\kappa}:=\big(\frac{a_{\kappa}}{A_{\kappa}}\big)^2 \big(\prod_{i={\kappa+1}}^n \big(1-\big(\frac{a_i}{A_i}\big)^2 \big)\big)$ for $1 \leq \kappa \leq n$. We split this sum  into two parts:

(i) First, using the notation of Section \ref{sec:Marking2} and the remarks just before Lemma \ref{Twomarked}, we see that
\begin{eqnarray*}
&& \sum_{\kappa=1}^{\lfloor nc\rfloor} p_{\kappa} \mathbb P\left(\frac{\Delta_{n,\kappa}^{(j)}(n)- \Delta_{n,\kappa}^{(j)}(\lfloor nc \rfloor)}{a_n} >\gamma \frac{ \ln(n)}{\ln(\ln(n))},j=1,2\right)  \\
&=&  \sum_{\kappa=1}^{\lfloor nc\rfloor} p_{\kappa} \mathbb E\Bigg[\mathbbm 1_{\left\{\frac{\Delta_{n,\kappa}^{(1)}(n)- \Delta_{n,\kappa}^{(1)}(\lfloor nc \rfloor)}{a_n} >\gamma \frac{ \ln(n)}{\ln(\ln(n))}\right\}}\Bigg.  \\
&& \hspace{2.15cm} \Bigg.  \times \mathbb P\Bigg( \frac{\sum_{i=\lfloor nc\rfloor+1}^n a_i V_i^{(2)}B^{(i,2)}}{a_n} >\gamma \frac{\ln(n)}{\ln(\ln(n))}  \  \big| \ B^{(i,1)}, V^{(1)}_i, 1 \leq i \leq n\Bigg)\Bigg]  \\
& \leq & \mathbb P\big(A_n^{(1)} \big) \sum_{\kappa=1}^{\lfloor nc\rfloor} p_{\kappa} \mathbb P\left(\frac{\Delta_{n,\kappa}^{(1)}(n)- \Delta_{n,\kappa}^{(1)}(\lfloor nc \rfloor)}{a_n} >\gamma \frac{ \ln(n)}{\ln(\ln(n))} \right)\\
&\leq & \mathbb P\big(A_n^{(1)}\big)^2.
\end{eqnarray*}
The first inequality  is due to the fact that the sum $\sum_{i=\lfloor nc\rfloor+1}^n a_i V_i^{(2)}B^{(i,2)}$ given $B^{(i,1)},V_i^{(1)}$, $1 \leq i \leq n$  is stochastically smaller than  $D^{(1)}_n-D^{(1)}_n(\lfloor nc \rfloor)$ since the distribution of $B^{(i,2)}$ conditional on $B^{(i,1)}=0$ is a Bernoulli r.v. with success parameter $a_i/A_i$, and moreover $B^{(i,2)}=0$ a.s. when $B^{(i,1)}=1$.  The second inequality follows immediately from Lemma \ref{Twomarked}.

(ii) Second,
\begin{eqnarray*}
&& \sum_{\kappa=\lfloor nc\rfloor+1}^n p_{\kappa} \mathbb P\left(\frac{\Delta_{n,\kappa}^{(j)}(n)- \Delta_{n,\kappa}^{(j)}(\lfloor nc \rfloor)}{a_n} >\gamma \frac{\ln (n)}{\ln(\ln(n))}, j=1,2 \right) \\
&\leq&  \sum_{\kappa=\lfloor nc\rfloor+1}^n p_{\kappa} \mathbb P\left(\frac{\sum_{i=\kappa+1}^n a_iV_i^{(1)}B^{(i,1)}+a_{\kappa} V_{\kappa}^{(1)}+\sum_{i=\lfloor nc \rfloor+1}^{\kappa-1} a_i V_i \mathbbm 1_{\{U_i \leq a_i/A_i\}}}{a_n} >\gamma \frac{\ln (n)}{\ln(\ln(n))}\right) \\
 & \leq & n^{-\gamma+\circ(1)} \sum_{\kappa=\lfloor nc\rfloor+1}^n p_{\kappa} = n^{-\gamma-1+\circ(1)}.
 \end{eqnarray*}
Indeed, note that
$$
\sum_{i=\kappa+1}^n a_iV_i^{(1)}B^{(i,1)}+a_{\kappa} V_{\kappa}^{(1)}+\sum_{i=\lfloor nc\rfloor+1}^{\kappa-1} a_i V_i \mathbbm 1_{\{U_i \leq a_i/A_i\}}
$$
is stochastically dominated by
$
a_{\kappa}+D^{(1)}_n-D^{(1)}_n(\lfloor nc \rfloor)
$
since $B^{(i,1)}$ is dominated by a Bernoulli r.v. with success parameter $a_i/A_i$, for all $i$.
So by Lemma \ref{lem:equiv} (ii) and the fact that  $a_{\kappa}\leq 2 a_n$ uniformly in $\kappa \in \{\lfloor nc \rfloor,  \ldots, n \}$ for $n$ large enough (see (\ref{regularvar1})), we get that
$$
\mathbb P\left(\frac{\sum_{i=\kappa+1}^n a_iV_i^{(1)}B^{(i,1)}+a_{\kappa} V_{\kappa}^{(1)}+\sum_{i=\lfloor nc \rfloor+1}^{\kappa} a_i V_i \mathbbm 1_{\{U_i \leq a_i/A_i\}}}{a_n} >\gamma \frac{\ln (n)}{\ln(\ln(n))}\right) \leq n^{-\gamma +\circ(1)}
$$
with a $\circ(1)$ independent of $\kappa  \in \{\lfloor nc \rfloor, \ldots, n \}$.  Moreover, by (\ref{regularvar2}), 
$$\sum_{\kappa=\lfloor nc \rfloor+1}^n p_{\kappa} \leq \sum_{\kappa=\lfloor nc \rfloor+1}^n \Big(\frac{a_{\kappa}}{A_{\kappa}}\Big)^2=n^{-1+\circ(1)}.$$

Finally, gathering the two upper bounds established in (i) and (ii) and using again that $\mathbb P(A^{(1)}_n)=n^{-\gamma+\circ(1)}$, we have proved that
$$
\frac{\mathbb P\big(A_n^{(1)}\cap A_n^{(2)}\big)}{\big(\mathbb P\big(A_n^{(1)}\big)\big)^2} \leq 1+\frac{n^{-\gamma-1+\circ(1)}}{n^{-2\gamma+\circ(1)}}=1+n^{\gamma-1+\circ(1)}
$$
as wanted.
$\hfill \square$

\bigskip
It remains to deduce (\ref{lowerbound}) from Lemma \ref{lemmamin}. In that aim fix $\gamma \in (0,1)$. A first consequence of  Lemma \ref{lemmamin} is that
\begin{equation}
\label{rec1Hn}
\mathbb P\left(\frac{H_n}{a_n} \leq \gamma \frac{\ln(n)}{\ln(\ln(n))} \right) \leq n^{\gamma-1+\circ(1)}.
\end{equation}
Now let $c \in (0,1)$ and note that $$H_n \geq \max \Big(H_{\lfloor nc \rfloor}, \max_{1\leq i\leq n} \big(D_n^{(i)}-D_n^{(i)}(\lfloor nc \rfloor)\big)\Big)$$ with $H_{\lfloor nc \rfloor}$ and $\max_{1\leq i\leq n} \big(D_n^{(i)}-D_n^{(i)}(\lfloor nc \rfloor)\big)$ \emph{independent}. Hence,
\begin{eqnarray*}
\mathbb P\left(\frac{H_n}{a_n} \leq \gamma \frac{\ln(n)}{\ln(\ln(n))} \right) &\leq& \mathbb P\left(\frac{H_{\lfloor nc \rfloor}}{a_n} \leq \gamma \frac{\ln(n)}{\ln(\ln(n))} \right) \\
&& \hspace{0.0cm}\times \hspace{0.15cm}\mathbb P \left( \frac{\max_{1\leq i\leq n} \big(D_n^{(i)}-D_n^{(i)}(\lfloor nc \rfloor)\big)}{a_n} \leq \gamma \frac{\ln(n)}{\ln(\ln(n))}\right) \\
&\leq& n^{\gamma c^{-\alpha}-1+\circ(1)} \cdot n^{\gamma-1+\circ(1)}
\end{eqnarray*}
by (\ref{rec1Hn}) applied to $\lfloor nc \rfloor$ instead of $n$ (together with the regular variation assumption on $(a_n)$) and Lemma \ref{lemmamin}.
Next, fix an integer  $k$  such that $(1-\gamma)k>1$. Iterating the previous argument, we get that
$$
\mathbb P\left(\frac{H_n}{a_n} \leq \gamma \frac{\ln(n)}{\ln(\ln(n))} \right) \leq n^{\gamma \sum_{j=0}^{k-1}c^{-\alpha j}-k +\circ(1)}.
$$
We  now choose $c \in (0,1)$ sufficiently close to 1 so that $\gamma \sum_{j=0}^{k-1}c^{-\alpha j}-k<-1$ and conclude with Borel--Cantelli lemma that almost surely 
$$
\frac{H_n \cdot \ln(\ln(n))}{a_n \ln(n)} > \gamma \quad \text{for all $n$ large enough}.
$$
This holds for all $\gamma<1$. Hence (\ref{lowerbound}).

\section{The case $a_n=1$}
\label{Sec1}

The goal of this section is to prove Theorem \ref{thm:BroutinDevroye}. In that aim we start by associating to a sequence $(T_n)$ built recursively from a sequence $(a_n)$ of positive lengths (with no constraints on the $a_n$s for the moment) a sequence of graph--theoretic trees $(R_n)$ that codes its genealogy as follows: 
\begin{enumerate}
\item[$\bullet$] $R_1$ is the tree composed by a unique vertex, labeled ${\tiny \circled{1}}$ 
\item[$\bullet$] if in $T_n$ the branch $\mathsf b_n$ is glued on the branch $\mathsf b_i$, $i<n$, then $R_n$ is obtained from $R_{n-1}$ by grafting a new vertex, labeled ${\tiny \circled{n}}$, to the vertex ${\tiny \circled{i}}$.  
\end{enumerate}
The vertex ${\tiny \circled{1}}$ is considered as the root of $R_n, \forall n\geq 1$. This sequence of genealogical trees has been used by \cite{ADGO14} to study the boundedness of $\overline{\cup_{n\geq 1} T_n}$.

From now on it is assumed that $a_n=1$ for all $n\geq 1$. In that case, for all $n$, $R_n$ is obtained by grafting the new vertex ${\tiny \circled{n}}$ to one vertex chosen uniformly at random amongst the $n-1$ vertices of $R_{n-1}$.  Hence $R_n$ is a \emph{uniform recursive tree} with $n$ leaves. Let $d_{R_n}$ denote the graph distance on $R_n$. It is well--known that 
\begin{equation}
\label{cvRecursif}
\frac{d_{R_n}({\tiny \circled{n}}, {\tiny \circled{1}})} {\ln(n)} \overset{\mathrm{a.s.}}{\underset{n\rightarrow \infty}\longrightarrow } 1 \quad \quad \text{and} \quad \quad \max_{1\leq i \leq n}\frac{d_{R_n}({\tiny \circled{i}}, {\tiny \circled{1}})}{\ln(n)} \overset{\mathrm{a.s.}}\longrightarrow e,
\end{equation}
see \cite{Devroye87, Pittel94}. Next we add lengths to  the edges of the trees $R_n,n\geq 2$. By construction, there exists a sequence of i.i.d. uniform r.v. $U_i,i\geq 1$ such that in $\cup_{n\geq 1} T_n$,
$$
d(L_i,\mathsf{root})=\sum_{j=1}^k U_{i_j}+U_i+1 \quad \text{if} \quad \mathsf b_1 \rightarrow \mathsf b_{i_1} \rightarrow \ldots \rightarrow \mathsf b_{i_k} \rightarrow \mathsf b_{i} 
$$
where the sequence $\mathsf b_1 \rightarrow \mathsf b_{i_1} \rightarrow \ldots \rightarrow \mathsf b_{i_k} \rightarrow \mathsf b_{i} $ represents the segments involved in the path from the root to $L_i$ (recall from the Introduction that the leaves are labelled by order of insertion). For all $n$ and all $2 \leq  i \leq n$, we decide to allocate the length $U_i$ to the edge in $R_n$ between the vertex ${\tiny \circled{i}}$ and its parent.  We denote by $\mathcal R_n$ this new tree with edge--lengths and by $d_{\mathcal R_n}$ the corresponding metric, so that finally,
\begin{equation}
\label{EqLien}
d(L_i,\mathsf{root})=d_{\mathcal R_n}({\tiny \circled{i}}, {\tiny \circled{1}})+1, \quad \text{for all leaves }L_i \in T_n.
\end{equation}  
See Figure \ref{fig:AggregGene} for an illustration. 

\begin{figure}
\begin{center}
\includegraphics[width=6cm, height=14cm,angle=-90]{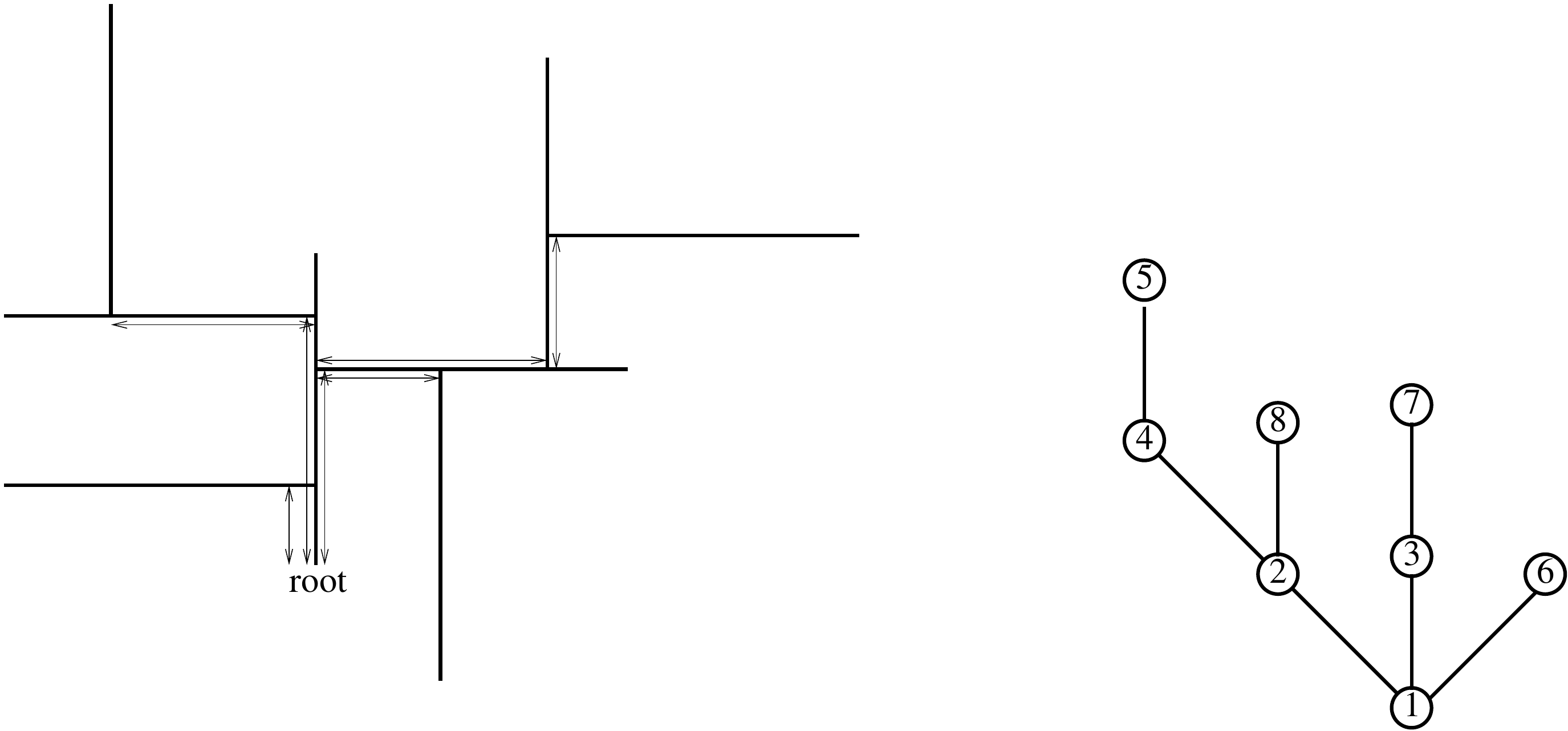}
\end{center}
\begin{picture}(0,0)(0,0)
\put(115,54){\small $U_6$}
\put(132,125){\small $L_1$}
\put(219,92){\small $L_2$}
\put(44,104){\small $L_3$}
\put(44,65){\small $L_6$}
\put(79,184){\small $L_7$}
\put(278,124){\small $L_5$}
\put(191,171){\small $L_4$}
\put(165,11){\small $L_8$}
\put(143,65){\small $U_2$}
\put(202,105){\small $U_5$}
\put(164,102){\small $U_4$}
\put(149,83){\small $U_8$}
\put(121,78){\small $U_3$}
\put(387,25){\small $U_2$}
\put(110,95){\small $U_7$}
\put(418,30){\small $U_3$}
\put(350,93){\small $U_5$}
\put(437,25){\small $U_6$}
\put(384,60){\small $U_8$}
\put(418,65){\small $U_7$}
\put(354,55){\small $U_4$}
\end{picture}
\caption{On the left, a version of the tree $T_8$. On the right, the associated genealogical tree with edge--lengths $\mathcal R_8$. The discrete (graph--theoretic) tree $R_8$ is obtained from $\mathcal R_8$ by forgetting the uniform lengths $U_i,2\leq i\leq 8$.} \label{fig:AggregGene}
\end{figure}

\bigskip

\textbf{Height of a typical vertex in $T_n$, height of leaf $L_n$, height of a uniform leaf of $T_n$.} The strong law of large numbers and the convergence on the left of (\ref{cvRecursif}) then clearly yield that $d_{\mathcal R_n}({\tiny \circled{n}}, {\tiny \circled{1}})/\ln(n)$ converges a.s. to $1/2$. This in turn yields that $d(L_n,\mathsf{root})/\ln(n)$ converges a.s. to $1/2$ and  that
$$
\frac{D_n}{\ln(n)}  \overset{\mathbb P}{\underset{n\rightarrow \infty}\longrightarrow } \frac{1}{2}
$$
since $\mathsf b_{n+1}$ is inserted on a uniform point of $T_{n}$. (More precisely, if we note, for each $n$, $\overline D_{n}$ the distance to the root of the insertion point of  $\mathsf b_{n+1}$ on $T_{n}$, we obtain versions of the $D_n$s that converge almost surely: $\overline D_n/\ln(n)\rightarrow 1/2$ a.s.). 

Moreover, from the (a.s.) convergence of $d(L_n,\mathsf{root})/\ln(n)$ to $1/2$, it is easy to get the convergence in probability of $d(L_{n,\star},\mathsf{root})/\ln(n)$ to $1/2$, where $L_{n,\star}$ is a uniform leaf of $T_n$. We let the reader adapt the proof seen in Section \ref{SecOneDim} for regularly varying sequences $(a_n)$ with a strictly positive index.

\bigskip

\textbf{Height of $T_n$.} From (\ref{EqLien}) it is clear that the height $H_n$ of $T_n$ has the same asymptotic behavior as the height of $\mathcal R_n$. Using results by Broutin and Devroye \cite{BD06} on the asymptotic behavior of heights of certain trees with edge--lengths, we obtain:

\bigskip

\begin{prop} 
As $n \rightarrow \infty$,
$$
\max_{1\leq i \leq n}\frac{d_{\mathcal R_n}({\emph{\tiny \circled{i}}}, \emph{{\tiny \circled{1}})}}{\ln(n)} \overset{\mathrm{\mathbb P}}\longrightarrow \frac{e^{\beta^*}}{2\beta^*},
$$
where $\beta^*$ is the unique solution in $(0,\infty)$ to the equation $2(e^{\beta}-1)=\beta e^{\beta}$. 
\end{prop}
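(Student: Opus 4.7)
The plan is to apply the general framework of Broutin and Devroye~\cite{BD06} for the weighted height of random recursive trees with i.i.d.\ edge weights. Two inputs feed their theorem. First, the edge weight $W\sim\mathrm{Unif}(0,1)$ has log-moment generating function
$$
\Lambda(\beta)\;=\;\ln\mathbb E[e^{\beta W}]\;=\;\ln\frac{e^{\beta}-1}{\beta},
$$
with Cram\'er rate $I_U:=\Lambda^*$. Second, via the Yule-process embedding of the URT, the depth profile of the unweighted tree $R_n$ satisfies
$$
\#\bigl\{v\in R_n:\mathrm{depth}(v)\approx\alpha\ln n\bigr\}\;=\;n^{\,1-I_P(\alpha)+o(1)},\qquad I_P(\alpha):=\alpha\ln\alpha-\alpha+1,
$$
the Poisson rate function (which in the unweighted case recovers Pittel's constant $e$ as the largest $\alpha$ with $I_P(\alpha)\le 1$).

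Combining these ingredients through a first- and second-moment analysis along branching-random-walk paths, \cite{BD06} yields convergence in probability of $(\ln n)^{-1}\max_i d_{\mathcal R_n}(\circled{i},\circled{1})$ to
$$
c^*\;:=\;\sup\bigl\{\gamma>0:\exists\,\alpha>0,\ I_P(\alpha)+\alpha\,I_U(\gamma/\alpha)\le 1\bigr\}.
$$

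The remaining task is to identify $c^*=e^{\beta^*}/(2\beta^*)$, a short Legendre-transform computation. At the optimum the constraint is tight and $\partial_\alpha[I_P(\alpha)+\alpha I_U(\gamma/\alpha)]=0$. Write $\nu=\gamma/\alpha$ and let $\beta^*$ be the Cram\'er tilt $\Lambda'(\beta^*)=\nu$, so that $I_U(\nu)=\beta^*\nu-\Lambda(\beta^*)$ and $I_U'(\nu)=\beta^*$; the envelope identity then yields $I_U(\nu)-\nu I_U'(\nu)=-\Lambda(\beta^*)$. Combined with $I_P'(\alpha)=\ln\alpha$, stationarity in $\alpha$ reduces to $\ln\alpha=\Lambda(\beta^*)$, i.e.\ $\alpha=(e^{\beta^*}-1)/\beta^*$. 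Plugging back into the saturated constraint $\alpha(\ln\alpha-1+I_U(\nu))=0$ gives $\beta^*\nu=1$, hence $\nu=1/\beta^*$, and the consistency $\Lambda'(\beta^*)=1/\beta^*$ rearranges (using $\Lambda'(\beta)=\tfrac{e^{\beta}}{e^\beta-1}-\tfrac{1}{\beta}$) to
$$
2(e^{\beta^*}-1)\;=\;\beta^* e^{\beta^*},
$$
the defining equation of $\beta^*$. Finally $c^*=\alpha\nu=(e^{\beta^*}-1)/(\beta^*)^2=e^{\beta^*}/(2\beta^*)$, using the $\beta^*$ equation once more.

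The main obstacle is not the variational calculus, which is routine Legendre-transform bookkeeping, but checking that the URT with i.i.d.\ $\mathrm{Unif}(0,1)$ weights fits the hypotheses of \cite{BD06} and that the Poisson profile $I_P$ is the correct depth-rate input. Given that translation, the upper bound on the weighted height is a Chernoff/first-moment argument, while the matching lower bound is the technical second-moment estimate of \cite{BD06}, applied to the subpopulation of vertices whose depth lies near the optimizer $\alpha=(e^{\beta^*}-1)/\beta^*$.
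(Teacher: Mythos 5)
Your computation is correct and arrives at the right equation for $\beta^*$, but your variational set-up differs from the one in the paper in a way worth flagging, because it is exactly where the gap you acknowledge lives.

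The paper follows \cite{BD06} literally: according to the paragraph after their Theorem~3, the recursive tree $\mathcal R_n$ is coupled with a random \emph{binary} tree with edge-lengths so that \cite[Theorem~1]{BD06} applies. In that framework the two ingredient random variables are $E\sim\mathrm{Exp}(1)$ (coming from the Yule-process embedding) and $Z$ with law $\tfrac12\delta_0+\tfrac12\,\mathbbm 1_{[0,1]}\mathrm dx$ --- note $Z$ is \emph{not} $\mathrm{Unif}(0,1)$: the atom at $0$ accounts for the ``horizontal'' edges in the binary encoding. The limit constant is then $\max\{\alpha/\rho:\Lambda_Z^*(\alpha)+\Lambda_E^*(\rho)=\ln 2\}$, and the stationarity conditions give $\alpha_{\max}\lambda(\alpha_{\max})=1-\rho_{\max}$, $\rho_{\max}=1/h(\lambda(\alpha_{\max}))$, hence $2(e^{\beta}-1)=\beta e^{\beta}$ with $\beta=\lambda(\alpha_{\max})$ and $c=e^\beta/(2\beta)$. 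Your set-up instead separates the problem into the Poisson rate $I_P(\alpha)=\alpha\ln\alpha-\alpha+1$ for the URT depth profile plus the Cram\'er rate of $\mathrm{Unif}(0,1)$ along the path, and optimizes $\sup\{\gamma:\exists\alpha,\ I_P(\alpha)+\alpha I_U(\gamma/\alpha)\le 1\}$. Your stationarity analysis (envelope identity, $\ln\alpha=\Lambda(\beta^*)$, $\beta^*\nu=1$, then $\Lambda'(\beta^*)=1/\beta^*\Leftrightarrow 2(e^{\beta^*}-1)=\beta^*e^{\beta^*}$, $c^*=(e^{\beta^*}-1)/(\beta^*)^2=e^{\beta^*}/(2\beta^*)$) is correct and a bit more transparent than the paper's. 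The two variational problems are related by the change of variables $a=1/(2\rho)$, $\nu=2\alpha$ induced by the binary coupling (since $Z=0$ with probability $1/2$, the URT depth is roughly half the binary depth), so they yield the same constant.

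The one genuine gap is the point you yourself flag: your formula $c^*=\sup\{\gamma:\exists\alpha,\ I_P(\alpha)+\alpha I_U(\gamma/\alpha)\le 1\}$ is \emph{not} literally the statement of \cite[Theorem~1]{BD06}, which is phrased for weighted binary trees with two generic edge attributes. To invoke \cite{BD06} rigorously you must either (a) carry out the binary coupling and translate back, which is what the paper does, or (b) re-derive the first-moment upper bound and, more substantially, the second-moment lower bound directly for the URT, which \cite{BD06} does not provide off the shelf. So the ``checking'' step you defer is not a formality; it is the step that the paper spends its effort on by working with $E$ and the mixed law $Z$. Once that translation is made, the Legendre computation you give is a valid alternative to the paper's.
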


\begin{proof}  We use several remarks or technics of \cite{BD06} and invite the reader to refer to this paper for details. First, according to the paragraph following  Theorem 3 in \cite{BD06}, the  random recursive trees $(\mathcal R_n)$ can be coupled with random binary trees with edge--lengths so as to fit the framework of \cite[Theorem 1]{BD06} on the asymptotic of heights of binary trees with edge--lengths. From this theorem, we then know that
$$
\max_{1\leq i \leq n}\frac{d_{\mathcal R_n}({\tiny \circled{i}}, {\text{\tiny \circled{1}}})}{\ln(n)} \overset{\mathbb P}{\underset{n \rightarrow \infty}\longrightarrow} c
$$ 
where $c$ is defined a few lines below. Let us first introduce some notation.

Let $E$ denote an exponential r.v. with parameter 1 and  $Z$ a real--valued r.v. with distribution \linebreak $(\delta_0(\mathrm dx)+\mathbbm 1_{[0,1]}(x) \mathrm dx)/2$, where $\delta_0$ denotes the Dirac measure at 0 and $\mathrm dx$ the Lebesgue measure on $\mathbb R$. 
Note that $\mathbb E[E]=1$ and $\mathbb E[Z]=1/4$.
Moreover, $$\Lambda_Z(t):=\ln\left(\mathbb E\left[e^{tZ}\right] \right)=\ln\left(1+\frac{e^t-1}{t} \right)-\ln(2), \quad \text{for }t \neq 0$$ and $\Lambda_Z(0)=0$. The corresponding Fenchel--Legendre transform $\Lambda^{*}_Z(t):=\sup_{\lambda \in \mathbb R}\left\{\lambda t-\Lambda_Z(\lambda)\right\}$ is then given by
$$
\Lambda^{*}_Z(t)=t \lambda(t)-\ln(h(\lambda(t)))+\ln(2) \quad \text{for } 0<t<1,
$$
and $\Lambda^{*}_Z(t)=+\infty$ for $t \notin(0,1)$,
where $h(u)=1+(e^u-1)/u,$ for $u \in \mathbb R$ ($h(0)=2$), and for $t \in (0,1)$, $\lambda(t)$ is defined by
$$t=\frac{h'(\lambda(t))}{h(\lambda(t))}$$
(the function $u \in \mathbb R \mapsto h'(u)/h(u) \in (0,1)$ -- with the convention $h'(0)/h(0)=1/4$ -- is bijective, increasing).
For the r.v. $E$, we more simply have
$$
\Lambda^{*}_E(t)=t-1-\ln(t) \quad \text{for }0<t<1
$$
and $\Lambda^{*}_E(t)=+\infty$ for $t \notin (0,1)$. According to \cite[Theorem 1]{BD06},  the limit $c$ introduced above is defined 
as the unique maximum of $\alpha/\rho$ along the curve
\begin{eqnarray}
\label{curve}
&&\left\{(\alpha,\rho):\Lambda^{*}_Z(\alpha)+\Lambda^{*}_E(\rho)=\ln(2), 0<\rho < 1, \frac{1}{4}\leq \alpha <1  \right\} \\
\nonumber
&=& \left\{(\alpha,\rho) : \alpha \lambda(\alpha)-\ln(h(\lambda(\alpha)))+\rho-1-\ln(\rho)=0 ), 0<\rho < 1, \frac{1}{4}\leq \alpha <1  \right\}
\end{eqnarray}
(according to \cite[Lemma 1]{BD06}, this curve is increasing and concave).

It remains to determine this maximum. 
We reason like  Broutin and Devroye at the end of their proof of \cite[Theorem 3]{BD06}. The slope of the curve is 
$$
\frac{\mathrm d \rho}{\mathrm d \alpha}=\frac{\lambda(\alpha)}{\frac{1}{\rho}-1}
$$
and on the other hand, at the maximum
$$
\frac{\mathrm d \rho}{\mathrm d \alpha}=\frac{\rho}{\alpha}.
$$
Hence, at the maximum
\begin{equation*}
\label{eqalpha}
\alpha_{\mathrm{max}} \lambda (\alpha_{\mathrm{max}})=1-\rho_{\mathrm{max}}.
\end{equation*}
Plugging this in (\ref{curve})  gives $\rho_{\mathrm{max}}=1/h(\lambda(\alpha_{\mathrm{max}}))$, which  gives in turn
$$
\alpha_{\mathrm{max}} \lambda (\alpha_{\mathrm{max}})=1-\frac{1}{h(\lambda(\alpha_{\mathrm{max}}))}.
$$
Setting $\beta_{\mathrm{max}}=\lambda(\alpha_{\mathrm{max}}) \Leftrightarrow \alpha_{\mathrm{max}}=h'(\beta_{\mathrm{max}})/h(\beta_{\mathrm{max}})$, this is equivalent to
$$
\frac{h'(\beta_{\mathrm{max}})}{h(\beta_{\mathrm{max}})} \beta_{\mathrm{max}}=1-\frac{1}{h(\beta_{\mathrm{max}})}.
$$ 
Simple manipulations then give
$$
2(e^{\beta_{\mathrm{max}}}-1)=\beta_{\mathrm{max}} e^{\beta_{\mathrm{max}}},
$$
which then leads to
$$
c=\frac{\alpha_{\mathrm{max}}}{\rho_{\mathrm{max}}}=\frac{1}{2}\frac{e^{\beta_{\mathrm{max}}}}{\beta_{\mathrm{max}}}.
$$
\end{proof}

\bigskip

\bigskip

\textbf{Acknowledgments.} I warmly thank Louigi Addario--Berry for pointing at the reference \cite{BD06}.

\bibliographystyle{siam}
\addcontentsline{toc}{section}{References}
\bibliography{FragAvril16}

\end{document}